\theoremstyle{plain}
\newtheorem{thm}{Theorem}[section]
\newtheorem{pro}[thm]{Proposition}
\newtheorem{lem}[thm]{Lemma}
\newtheorem{cor}[thm]{Corollary}
\newtheorem{con}[thm]{Conjecture}
\theoremstyle{definition}
\newtheorem{dfn}[thm]{Definition}
\newtheorem{rem}[thm]{Remark}
\theoremstyle{remark}
\newcommand{\N}{\mathbb{N}}
\newcommand{\Q}{\mathbb{Q}}
\newcommand{\OO}{\mathcal{O}}
\begin{document}
\title[Rationally connected varieties -- on a conjecture of Mumford]{Rationally connected varieties --\\ on a conjecture of Mumford}

\author{Vladimir Lazi\'c}
\address{Mathematisches Institut, Universit\"at Bonn, Endenicher Allee 60, 53115 Bonn, Germany}
\email{lazic@math.uni-bonn.de}

\author{Thomas Peternell}
\address{Mathematisches Institut, Universit\"at Bayreuth, 95440 Bayreuth, Germany}
\email{thomas.peternell@uni-bayreuth.de}

\dedicatory{Dedicated to the memory of Qikeng Lu}

\thanks{Lazi\'c was supported by the DFG-Emmy-Noether-Nachwuchsgruppe ``Gute Strukturen in der h\"oherdimensionalen birationalen Geometrie". Peternell was supported by the DFG grant ``Zur Positivit\"at in der komplexen Geometrie".
}

\begin{abstract}
We establish a conjecture of Mumford characterizing rationally connected complex projective manifolds in several cases.
\end{abstract}

\maketitle

\setcounter{tocdepth}{1}
\tableofcontents

\section{Introduction}

A complex projective manifold $X$ is \emph{rationally connected} if any two general points can be joined by a chain of rational curves. On a rationally connected manifold one can find (many) rational curves $C \subseteq X$ such that $T_X \vert_C$ is ample and deduce that 
$$ H^0\big(X,(\Omega^1_X)^{\otimes m}\big) = 0\quad\text{for all } m \geq 1.$$ 
We refer to \cite{Kol96} for a detailed discussion of rationally connected varieties. A well-known conjecture of Mumford says that the converse is also true  \cite[Conjecture IV.3.8.1]{Kol96}.

\begin{con} \label{mum}
Let $X$ be a projective manifold such that 
$$ H^0\big(X,(\Omega^1_X)^{\otimes m}\big) = 0\quad\text{for all } m \geq 1.$$
Then $X$ is rationally connected.
\end{con}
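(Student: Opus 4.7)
The plan is to proceed via the maximal rationally connected (MRC) fibration. First, after a resolution of indeterminacies, one obtains a birational morphism $\pi \colon \tilde X \to X$ and a dominant morphism $\tilde f \colon \tilde X \to Z$ onto a smooth projective variety $Z$ whose general fibre is rationally connected. The aim is to show that $\dim Z = 0$; I would therefore assume for contradiction that $d := \dim Z \geq 1$ and try to derive a nonzero holomorphic tensor form on $X$, contradicting the hypothesis.

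By the theorem of Graber--Harris--Starr, $Z$ is not uniruled, so by BDPP the class $K_Z$ is pseudo-effective. The key step is to produce, for some positive integer $m$, a nonzero section $s \in H^0(Z, mK_Z)$. Granting this for the moment, the rest is formal: the differential of the dominant morphism $\tilde f$ yields an injection $\tilde f^* \Omega^1_Z \hookrightarrow \Omega^1_{\tilde X}$, and its $d$-th exterior power gives an injection $\tilde f^* K_Z \hookrightarrow \Omega^d_{\tilde X}$. Composing with the characteristic-zero antisymmetrisation embedding $\Omega^d_{\tilde X} \hookrightarrow (\Omega^1_{\tilde X})^{\otimes d}$ and raising to the $m$-th tensor power produces
\[
\tilde f^*(mK_Z) \;\hookrightarrow\; (\Omega^1_{\tilde X})^{\otimes dm}.
\]
Pulling back $s$ then yields a nonzero global section of $(\Omega^1_{\tilde X})^{\otimes dm}$, and by the birational invariance of tensor differentials on smooth projective varieties this section descends to $X$, contradicting the assumption.

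The whole problem therefore reduces to the following \emph{nonvanishing} statement: for a non-uniruled smooth projective variety $Z$, some positive multiple of $K_Z$ admits a nonzero global section. This is predicted by the abundance conjecture and is in general open; but it is accessible in several situations --- for instance when $\dim Z \leq 3$, when one can control the Kodaira dimension of $Z$ via geometric input transferred from $X$, or when $Z$ is shown to admit a good minimal model so that base-point-free type results apply. The strategy will be to single out classes of manifolds $X$ for which the nonvanishing on $Z$ can be ensured, and in each case to run the formal chain of embeddings above.

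The hard part will be precisely this nonvanishing step: the reduction itself is formal and uses only that $Z$ is smooth (after resolving) and not uniruled. I would expect most of the effort to concentrate on verifying, under the hypotheses of each ``case'' envisaged by the paper, that the base of the MRC fibration carries a nontrivial pluricanonical section --- either through a dimension bound on $Z$, through positivity properties inherited from $\Omega^1_X$, or by running an MMP on $Z$ and appealing to abundance on the resulting minimal model.
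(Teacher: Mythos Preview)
Your reduction is exactly the one the paper carries out in Proposition~\ref{pro:mumford}: pass to the MRC base $Z$, use Graber--Harris--Starr and \cite{BDPP} to see that $K_Z$ is pseudoeffective, and pull back a nonzero tensor form from $Z$ to $X$. The only nuance is that the paper formulates the needed input on $Z$ as Conjecture~\ref{mum2} (nonvanishing of some $H^0\big(Z,(\Omega^1_Z)^{\otimes m}\big)$), which is a priori weaker than the pluricanonical nonvanishing $\kappa(Z)\geq 0$ you reduce to --- and it is this weaker statement that the paper then verifies in the special cases of Sections~\ref{sec:nd1}--5, although in practice each case does proceed by establishing $\kappa\geq 0$.
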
 

This conjecture holds when the dimension of $X$ is at most $3$ \cite{KMM92}, and not much has been known in higher dimensions. It is, however, well known that this conjecture is equivalent to a weaker statement which says that in the context of Conjecture \ref{mum}, the variety $X$ is \emph{uniruled}, i.e.\ covered by rational curves:

\begin{con} \label{mum2}
Let $X$ be a projective manifold such that $K_X$ is pseudoeffective. Then there exists a positive integer $m$ such that 
$$ H^0\big(X,(\Omega^1_X)^{\otimes m}\big) \neq 0. $$
\end{con}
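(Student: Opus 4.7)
The plan is to reduce Conjecture~\ref{mum2} to the nonvanishing conjecture of the minimal model program, which predicts that pseudoeffectivity of $K_X$ implies $\kappa(X)\dgeq 0$.

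Suppose first that nonvanishing holds, so that there is an integer $m\dgeq 1$ with $H^0(X,\OO_X(mK_X))\neq 0$. Set $n=\dim X$. Since $K_X = \det\Omega^1_X$, the antisymmetrization projector $\tfrac{1}{n!}\sum_{\sigma\in S_n}\mathrm{sgn}(\sigma)\cdot\sigma$ exhibits $\det\Omega^1_X$ as a direct summand of $(\Omega^1_X)^{\otimes n}$ in characteristic zero (via the Schur decomposition corresponding to the partition $(1^n)$). Taking the $m$th tensor power of this splitting makes $\OO_X(mK_X)$ a direct summand of $(\Omega^1_X)^{\otimes mn}$, whence
\[
H^0\big(X,(\Omega^1_X)^{\otimes mn}\big)\dsupseteq H^0\big(X,\OO_X(mK_X)\big)\neq 0,
\]
which is exactly the desired nonvanishing of tensor differentials.

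Thus the entire statement reduces to nonvanishing for pseudoeffective canonical divisors. Since this is known unconditionally only through dimension three (Miyaoka) and when $K_X$ is big, in higher dimensions one should aim to establish nonvanishing under additional structural hypotheses. Natural cases to try are: when the irregularity $q(X)$ is positive, where the Albanese map and generic-vanishing techniques in the style of Green--Lazarsfeld and Chen--Hacon apply; when the numerical dimension $\nu(K_X)$ is extremal (either zero, where one expects $K_X$ torsion, or equal to $\dim X$, where $K_X$ is already big); and when suitable Schur-functor constructions applied to the Harder--Narasimhan filtration of $\Omega^1_X$ with respect to a movable class produce a destabilising subsheaf of positive degree. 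In each such situation the argument above upgrades a canonical-bundle section to a section of some $(\Omega^1_X)^{\otimes N}$.

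The main obstacle is precisely the nonvanishing step, which is one of the deepest open problems in birational geometry. A more direct alternative is to bypass it by constructing sections of $(\Omega^1_X)^{\otimes m}$ without first producing sections of $\OO_X(mK_X)$: pseudoeffectivity of $K_X$ together with the Boucksom--Demailly--P\u{a}un--Peternell theorem forces the top slope of $\Omega^1_X$ (with respect to movable classes) to be nonnegative, and one can try to promote generic semipositivity results of Miyaoka and Shepherd-Barron into actual global sections through a suitable tensor or Schur construction. Converting the resulting numerical positivity into global sections is, however, the essential technical difficulty, and is where any general attack on the conjecture must concentrate its effort.
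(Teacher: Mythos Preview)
Your proposal is not a proof of Conjecture~\ref{mum2}, and you are essentially aware of this: you correctly observe that a section of $\OO_X(mK_X)$ yields a section of $(\Omega^1_X)^{\otimes mn}$ (this is exactly Remark~\ref{rem:tensor} in the paper), and then you reduce Conjecture~\ref{mum2} to the nonvanishing conjecture $\kappa(X,K_X)\geq 0$. But nonvanishing is strictly \emph{stronger} than Conjecture~\ref{mum2} and is wide open beyond dimension three, so what you have written is a reduction to a harder problem, not a proof. The remainder of your text is a survey of possible strategies rather than an argument.

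The paper does not prove Conjecture~\ref{mum2} in general either; it is stated as a conjecture and only special cases are established. The key idea you are missing, and which drives all of the paper's partial results, is the following contrapositive observation: if $H^0\big(X,(\Omega^1_X)^{\otimes m}\big)=0$ for every $m\geq 1$, then in particular $H^q(X,\OO_X)\simeq H^0(X,\Omega^q_X)=0$ for all $q\geq 1$ by Hodge theory, and hence $\chi(X,\OO_X)=1$. This numerical constraint is then fed into Hirzebruch--Riemann--Roch computations (combined with Kawamata--Viehweg-type vanishing and results from \cite{LP16}) to force $\kappa(X,K_X)\geq 0$ in the cases $\nu(X,K_X)\in\{1,\dim X-1\}$, yielding a contradiction. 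Your proposal never isolates or exploits the condition $\chi(X,\OO_X)\neq 0$, which is precisely the leverage the paper uses to make progress without assuming full nonvanishing.
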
 

The connection to uniruledness comes from the main result of \cite{BDPP}, stating that a projective manifold $X$ is uniruled if and only if $K_X$ is not pseudoeffective. A short proof of the equivalence of these two conjectures is given in Proposition \ref{pro:mumford}. A similar proof yields the following weaker characterization of rational connectedness obtained in \cite{Pet06,CDP15}: 

\begin{thm} 
Let $X$ be a projective manifold. Then $X$ is rationally connected if and only if for some ample line bundle $A$ on $X$ there is a constant $C$ depending on $A$ such that 
$$ H^0\big(X,(\Omega^1_X)^{\otimes m} \otimes A^{\otimes k}\big) = 0 $$
for all positive integers $k$ and $m$ with $m \geq Ck.$
\end{thm}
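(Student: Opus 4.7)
My approach is to handle the two directions separately, both in the spirit of Proposition \ref{pro:mumford}.

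For the ``only if'' direction, the plan is to fix an ample line bundle $A$ on the rationally connected manifold $X$ and exploit a restriction-to-a-free-curve argument. I would first choose a very free rational curve $f_0\colon \PS^1 \to X$ (so that $f_0^*T_X$ is ample), let $\{f_t\}_{t\in T}$ be an irreducible component of the corresponding Hom-scheme containing $[f_0]$, and observe that by deformation theory the images $f_t(\PS^1)$ sweep out a dense open subset of $X$, while by semicontinuity the splitting $f_t^*T_X\cong\bigoplus_i \OO_{\PS^1}(a_i(t))$ satisfies $a_i(t)\ge 1$ on a dense open $T^\circ\subseteq T$. Setting $C\equ A\cdot (f_t)_*[\PS^1]+1$ (an integer independent of $t$), the claim is then that for $m\ge Ck$ and any $s\in H^0\big(X,(\Omega^1_X)^{\otimes m}\otimes A^{\otimes k}\big)$, every line bundle summand of $f_t^*\big((\Omega^1_X)^{\otimes m}\otimes A^{\otimes k}\big)$ has degree at most $-m+k(C-1)\le -k<0$ on $\PS^1$, so $f_t^*s\equ 0$ for $t\in T^\circ$; since these curves cover a dense open subset of $X$, the section $s$ must vanish identically.

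For the ``if'' direction I would argue contrapositively, assuming $X$ is not rationally connected and producing sections of $(\Omega^1_X)^{\otimes m}\otimes A^{\otimes k}$ with $m/k$ arbitrarily large for any fixed ample $A$. Resolving the indeterminacies of the maximal rationally connected fibration yields a birational morphism $\pi\colon\widetilde X\to X$ and a morphism $g\colon\widetilde X\to Z$ onto a smooth projective variety $Z$ of dimension $d\ge 1$. By the theorem of Graber--Harris--Starr, $Z$ is not uniruled, and by \cite{BDPP} the canonical class $K_Z$ is pseudoeffective; hence $g^*K_Z$ is pseudoeffective on $\widetilde X$. Since $g$ has separable generic fibers, the codifferential yields a generically injective map $g^*\Omega^1_Z\to\Omega^1_{\widetilde X}$, and taking $d$-th wedge powers gives an inclusion of sheaves $g^*K_Z\hookrightarrow \Omega^d_{\widetilde X}\hookrightarrow (\Omega^1_{\widetilde X})^{\otimes d}$. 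With $\widetilde A\equ \pi^*A$ (big and nef on $\widetilde X$), the class $g^*K_Z+\varepsilon\widetilde A$ is big for every $\varepsilon>0$, so Kodaira's lemma produces a nonzero section of $Ng^*K_Z+\lfloor N\varepsilon\rfloor\widetilde A$ for sufficiently large and divisible $N$. Tensoring the inclusion above with $\widetilde A^{\otimes \lfloor N\varepsilon\rfloor}$ converts this into a nonzero element of $H^0\big(\widetilde X,(\Omega^1_{\widetilde X})^{\otimes Nd}\otimes \widetilde A^{\otimes\lfloor N\varepsilon\rfloor}\big)$, which by Hartogs and reflexivity of tensor powers of $\Omega^1_X$ descends to a nonzero element of $H^0\big(X,(\Omega^1_X)^{\otimes Nd}\otimes A^{\otimes \lfloor N\varepsilon\rfloor}\big)$. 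With $m\equ Nd$ and $k\equ \lfloor N\varepsilon\rfloor$, the ratio $m/k$ approaches $d/\varepsilon$, exceeding any prescribed $C$ once $\varepsilon<d/C$.

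The main obstacle will be the ``if'' direction, namely converting the pseudoeffective class $g^*K_Z$ into actual $H^0$ sections with tightly controlled $\widetilde A$-coefficient; this is exactly what Kodaira's lemma provides for the big class $g^*K_Z+\varepsilon\widetilde A$. The descent from $\widetilde X$ to $X$ relies only on $\pi(\Exc\pi)$ having codimension $\ge 2$ in the smooth $X$ together with the reflexivity of $(\Omega^1_X)^{\otimes m}\otimes A^{\otimes k}$; the ``only if'' direction, by contrast, is routine once a suitable covering family of very free rational curves has been fixed.
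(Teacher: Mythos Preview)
Your proposal is correct and follows precisely the route the paper indicates: the paper does not actually prove this theorem but remarks that ``a similar proof'' to that of Proposition~\ref{pro:mumford} (MRC fibration combined with \cite{GHS03} and \cite{BDPP}) yields it, citing \cite{Pet06,CDP15}. Your ``if'' direction is exactly that argument, with the twist by $\varepsilon\widetilde A$ and Kodaira's lemma supplying the sections that Conjecture~\ref{mum2} would otherwise furnish, and your ``only if'' direction is the standard restriction-to-very-free-curves computation.
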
 

We also mention a stronger conjecture from \cite{BC15}, stating that $X$ is rationally connected if and only if 
\begin{equation}\label{eq:11}
H^0\big(X, S^k\Omega^p_X \big) = 0 
\end{equation}
for all positive integers $k$ and $p$. The main result of \cite{BC15} is that condition \eqref{eq:11} implies that $X$ is simply connected. 

\medskip

In this paper, we prove several results towards Conjecture \ref{mum2}.
Notice that if $\kappa (X,K_X) \geq 0$ in Conjecture \ref{mum2}, then in particular 
$$ H^0\big(X,(\Omega^1_X)^{\otimes m}\big) \neq 0 $$
for some $m$, see Remark \ref{rem:tensor}, but Conjecture \ref{mum2} is much weaker than the nonvanishing $\kappa (X,K_X) \geq 0$. 

An important invariant of a projective manifold $X$ with pseudoeffective canonical bundle $K_X$  is its numerical dimension $ \nu(X,K_X)$. If $Y$ is a minimal model of $X$, so that $K_Y$ is nef, then $\nu(X,K_X)=\nu(Y,K_Y)$ is the largest non-negative integer $d$ such that $K_Y^d \not \equiv 0$; the general definition is in Section \ref{prelim}. It is well known that $\nu(X,K_X) \geq \kappa (X,K_X) $, and one of the equivalent formulations of the abundance conjecture is that
$$ \nu(X,K_X) = \kappa (X,K_X).$$ 
The abundance conjecture is known to hold when $\nu(X,K_X) = 0$ by \cite{Nak04} and when $\nu(X,K_X) = \dim X$ by \cite{Sho85,Kaw85b}, which in particular proves Conjecture \ref{mum2} in these cases. Thus it remains to prove Conjecture \ref{mum2} when
$$ 1 \leq  \nu(X,K_X) \leq  \dim X -1.$$ 
In this paper we deal with the extremal cases $\nu(X,K_X) = 1$ and $\nu(X,K_X) = \dim X - 1$.

Our main result is the following.

\begin{thm}\label{main}
Conjecture \ref{mum2} holds when $\dim X=4$ and $\nu(X,K_X)\neq2$.
\end{thm}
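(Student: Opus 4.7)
The plan is to prove the stronger statement that $\kappa(X,K_X)\ge 0$. By the equivalence mentioned above (and by Remark \ref{rem:tensor}), a non-zero section of $mK_X=(\det\Omega^1_X)^{\otimes m}$ yields a non-zero section of $(\Omega^1_X)^{\otimes 4m}$ through the natural inclusion of $(\Omega^4_X)^{\otimes m}$ into $(\Omega^1_X)^{\otimes 4m}$, so it suffices to establish non-vanishing. The cases $\nu(X,K_X)=0$ and $\nu(X,K_X)=4$ are already covered by the results of Nakayama and of Shokurov--Kawamata recalled in the introduction, so under the hypothesis $\nu(X,K_X)\ne 2$ I only need to handle $\nu(X,K_X)\in\{1,3\}$. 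In both cases, running the fourfold MMP (available since $K_X$ is pseudoeffective) one may replace $X$ by a minimal model $Y$ with $K_Y$ nef, and work to prove $\kappa(Y,K_Y)\ge 0$.

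\textbf{The case $\nu(Y,K_Y)=\dim Y-1=3$.} Here $K_Y$ is nef with $K_Y^3\not\equiv 0$ and $K_Y^4=0$, so $K_Y$ is ``big in codimension one''. The strategy is to exploit Nakayama's divisorial Zariski decomposition $K_Y=P_\sigma+N_\sigma$. If $N_\sigma\ne 0$, its support contributes to an effective divisor in $|mK_Y|$ by the standard comparison of $\sigma$-decomposition with the Iitaka ring. If $N_\sigma=0$, one uses the nef reduction of $K_Y$ together with a Riemann--Roch computation on the base of this reduction to produce the required section; the point is that $\nu=\dim-1$ is close enough to the big case that $\chi(Y,mK_Y)$ grows like $m^3$, and pluricanonical sheaves on the base behave like divisors on a surface where non-vanishing is classical.

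\textbf{The case $\nu(Y,K_Y)=1$.} Now $K_Y^2\equiv 0$ and $K_Y\ne 0$, and this is the geometrically delicate case that I expect to be the main obstacle. The approach is structural: combine the Albanese morphism, the MRC fibration, and Campana--P\u{a}un/Viehweg positivity of direct images of relative pluricanonical sheaves to produce a morphism $f\colon Y\to C$ onto a positive-dimensional base such that $K_Y$ is numerically, up to contributions from exceptional divisors, a rational multiple of a fibre of $f$. Once such a fibration is in place, non-vanishing of $\kappa(Y,K_Y)$ follows from non-vanishing on $C$ combined with weak positivity of $f_*\omega_{Y/C}^{\otimes m}$.

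\textbf{Main obstacle.} The hard part is the case $\nu=1$: the positivity of $K_Y$ is too weak for any direct application of Kawamata--Viehweg vanishing, and the whole argument hinges on building and analysing a fibration structure whose fibres are ``numerically Calabi--Yau'' (in the sense $K_{Y_t}\equiv 0$). Controlling the singularities of the MMP output and ensuring the relevant positivity theorems for $f_*\omega_{Y/C}^{\otimes m}$ apply in this setting will be the technical heart of the proof.
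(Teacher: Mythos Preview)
Your plan has a fundamental strategic gap: you set out to prove the \emph{stronger} statement $\kappa(X,K_X)\ge 0$ outright. But non-vanishing for fourfolds with $\nu=1$ or $\nu=3$ is open --- it is essentially abundance in those numerical dimensions. The paper does \emph{not} prove $\kappa\ge 0$ unconditionally. Its key move is the contrapositive: assume $H^0\big(X,(\Omega^1_X)^{\otimes m}\big)=0$ for all $m$; then in particular $H^q(X,\OO_X)\simeq H^0(X,\Omega^q_X)=0$ for $q\ge 1$, hence $\chi(X,\OO_X)=1$. This extra hypothesis $\chi(X,\OO_X)\neq 0$ is precisely what makes both cases tractable, and your outline never invokes it.

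On the details: for $\nu=3$, your Zariski-decomposition argument is vacuous, since on the minimal model $K_Y$ is already nef and hence $N_\sigma(K_Y)=0$. Your Riemann--Roch claim is also wrong: because $K_Y^4=0$ and $K_Y^3\cdot c_1(Y)=-K_Y^4=0$, the expansion of $\chi(Y,mK_Y)$ has \emph{no} $m^4$ or $m^3$ term; the leading term is of order $m^2$, controlled by $(\pi^*K_X)^2\cdot c_2$. The paper combines this Riemann--Roch computation with Kawamata--Viehweg vanishing (which kills $H^i$ for $i\ge 2$ since $\nu=3$) to get $h^0$ growing like $m^2$ when $(\pi^*K_X)^2\cdot c_2>0$, and falls back on $\chi(X,\OO_X)>0$ otherwise. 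For $\nu=1$, your fibration/Albanese/positivity sketch is an attempt at abundance and does not close; the paper instead quotes \cite{LP16}, whose input is again exactly $\chi(X,\OO_X)\neq 0$ (via a Hard Lefschetz argument on multiplier ideals). In short, the whole proof pivots on extracting $\chi=1$ from the vanishing hypothesis on tensor powers of $\Omega^1_X$, and without that step your program cannot be completed.
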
 

The theorem is a consequence of much more general results which work in every dimension. We first prove Conjecture \ref{mum2} when $\nu(X,K_X)=1$ and $X$ has a minimal model, see Theorem \ref{thm:nu11}:

\begin{thm}\label{thm:1}
Let $X$ be a projective manifold such that $K_X$ is pseudo-effective, and assume that $X$ has a minimal model. If $\nu(X,K_X) =1$, then there exists a positive integer $m$ such that 
$$ H^0\big(X,(\Omega^1_X)^{\otimes m}\big) \neq 0. $$
\end{thm}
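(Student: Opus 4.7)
The plan is to pass to the minimal model $Y$ of $X$ and extract the required tensor section from an almost holomorphic fibration $Y \dashrightarrow C$ onto a curve that is forced by the hypothesis $\nu(K_Y)=1$. Since $H^{0}\bigl(-,(\Omega^{1})^{\otimes m}\bigr)$ is a birational invariant on smooth projective varieties, I work on any smooth model $\tilde Y$ birational to $X$ that dominates both $X$ and $Y$; any nonzero tensor section on $\tilde Y$ descends to one on $X$.

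On the minimal model $Y$, $K_Y$ is nef with $\nu(Y,K_Y)=1$. The nef dimension $n(K_Y)$ satisfies $n(K_Y)\le\nu(K_Y)=1$ and is nonzero (else $K_Y\equiv 0$), so the nef reduction theorem (Bauer--Campana--Eckl--Kebekus--Peternell--Szab\'o--Wi\'sniewski) produces an almost holomorphic dominant rational map $f\colon Y\dashrightarrow C$ onto a smooth projective curve $C$, along whose very general fibers $K_Y$ is numerically trivial, and transverse to which $K_Y$ has strictly positive intersection. After further blowing up I may assume $f$ lifts to a morphism $\tilde f\colon\tilde Y\to C$.

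If $g(C)\ge 1$, the pullback $\tilde f^{*}\alpha$ of a nonzero $\alpha\in H^{0}(C,\omega_C)$ gives a nonzero element of $H^{0}\bigl(\tilde Y,\Omega^{1}_{\tilde Y}\bigr)$, and the conclusion follows. If $C\simeq\PS^{1}$, a very general fiber $F$ of $\tilde f$ satisfies $K_F\equiv 0$, hence $K_F\sim_\Q 0$ by Nakayama's abundance theorem in numerical dimension zero. The Fujino--Mori canonical bundle formula then yields a $\Q$-linear equivalence $K_{\tilde Y}\sim_\Q\tilde f^{*}G$ for some $\Q$-divisor $G$ on $\PS^{1}$. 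Intersecting with a general section $H$ of $\tilde f$ avoiding the exceptional locus of $\pi\colon\tilde Y\to Y$ and using the positivity of $K_Y$ on curves transverse to $f$, one computes $\deg G = K_Y\cdot\pi_{*}H>0$. On $\PS^{1}$ a $\Q$-divisor of positive degree has an effective integer multiple, so $m K_{\tilde Y}$ admits a nonzero global section for some $m$. Since in characteristic zero $\det\Omega^{1}=K$ is a direct summand of $(\Omega^{1})^{\otimes\dim X}$ via antisymmetrization, this produces a nonzero section of $\bigl(\Omega^{1}_{\tilde Y}\bigr)^{\otimes m\dim X}$, hence of $\bigl(\Omega^{1}_{X}\bigr)^{\otimes m\dim X}$.

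The main obstacle is the $g(C)=0$ case. The nef reduction is a priori only an almost holomorphic rational map, and promoting it to a morphism on which the canonical bundle formula applies cleanly, with the correct singularities, boundary, and nef moduli part, requires careful bookkeeping. The crucial step from numerical data to genuine $\Q$-linear effectivity on $\PS^{1}$ hinges on the canonical bundle formula, which is available precisely because the fibers have $K\equiv 0$ by Nakayama; the hypothesis $\nu(K_Y)=1$ is used both to force the fibration structure via the nef reduction and, through the transverse positivity of $K_Y$, to guarantee the strict inequality $\deg G>0$ needed to conclude.
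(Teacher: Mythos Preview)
Your argument has a genuine gap at the very first step on the minimal model. You assert that the nef dimension satisfies $n(K_Y)\le\nu(K_Y)=1$, but the inequality goes the other way: for a nef divisor $L$ one has $\nu(L)\le n(L)$, and this can be strict. Mumford's example on a ruled surface $\PS(E)$ over a curve of genus $\ge 2$, with $E$ a sufficiently general stable bundle of degree $0$, gives a strictly nef divisor $L$ with $L^2=0$; here $\nu(L)=1$ while $n(L)=2$. So from $\nu(K_Y)=1$ you cannot conclude that the nef reduction maps to a curve. In fact, establishing $n(K_Y)=1$ in this situation would already give a fibration onto a curve with $K$-trivial general fibre, and your subsequent canonical bundle formula argument would then yield $\kappa(K_Y)\ge 1$, hence abundance for $\nu=1$; this is exactly the open problem one is trying to circumvent. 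The remainder of your outline (the $g(C)\ge 1$ case, and the Fujino--Mori step over $\PS^1$) is reasonable once that fibration is in hand, but without it the whole strategy does not get off the ground.

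The paper proceeds entirely differently and avoids the nef reduction. One argues by contradiction: if $H^0\big(X,(\Omega^1_X)^{\otimes m}\big)=0$ for all $m$, then in particular $H^q(X,\OO_X)\simeq H^0(X,\Omega^q_X)=0$ for all $q\ge 1$, so $\chi(X,\OO_X)=1$. Passing to the minimal model $Y$ preserves both $\nu=1$ and $\chi(\OO)=1$, and then one invokes the main input, namely \cite[Theorem~6.7]{LP16}: on a minimal terminal variety with $\nu(K)=1$ and $\chi(\OO)\neq 0$ one has $\kappa(K)\ge 0$. That already produces a pluricanonical section and hence a nonzero section of some $(\Omega^1_X)^{\otimes m}$, giving the contradiction. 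The underlying machinery in \cite{LP16} is not the nef reduction but the birational stability of the cotangent bundle, a divisorial criterion for $\kappa\ge 0$ specific to $\nu=1$, multiplier ideals, and the Hard Lefschetz theorem of \cite{DPS01}.
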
 

The main input are our techniques from \cite{LP16}, where -- among other results -- we proved the abundance conjecture for varieties with $\nu(X,K_X)=1$ and $\chi(X,\OO_X)\neq0$. We discuss these ideas in Section \ref{sec:nd1}.

When $\nu(X,K_X) = \dim X -1$, we show in Theorem \ref{thm:nu-n}:

\begin{thm} 
Let $X$ be a minimal terminal $n$-fold with $\nu(X,K_X) = n-1$ and $n \geq 4$, and let $\pi\colon Y \to X$ be a resolution which is an isomorphism over the smooth locus of $X$. Assume one of the following:
\begin{enumerate}
\item[(a)] $(\pi^*K_X)^{n-2}  \cdot c_2(Y) \neq 0$;
\item[(b)] $(\pi^*K_X)^{n-2}  \cdot c_2(Y) = 0$ and $(\pi^*K_X)^{n-3} \cdot K_Y \cdot c_2(Y) \ne 0$. 
\end{enumerate} 
Then $K_X$ is semiample. 
\end{thm}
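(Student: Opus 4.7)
The plan is to combine asymptotic Hirzebruch-Riemann-Roch on the resolution $Y$ with Miyaoka's generic semipositivity theorem to produce pluricanonical sections, and then to invoke abundance-type arguments in the spirit of \cite{LP16} to upgrade nonvanishing to semiampleness of $K_X$. The first input is terminality of $X$: we have $\codim_X \Sing X \geq 3$, so $\dim \pi(E) \leq n-3$ for the exceptional locus $E$ of $\pi$. Writing $K_Y = \pi^* K_X + E$ with $E$ effective exceptional and setting $L = \pi^* K_X$, the cycle $\pi_*(E^k)$ of dimension $n-k$ on $X$ is supported on $\pi(E)$ and therefore vanishes for $k = 0,1,2$; by the projection formula this gives $L^{n-1} \cdot K_Y = 0$ and $L^{n-2} \cdot K_Y^{2} = 0$.

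Hirzebruch-Riemann-Roch then yields
\[
\chi(Y, mL) = \sum_{i=0}^n \frac{m^i}{i!}\, L^i \cdot \mathrm{td}_{n-i}(Y).
\]
Since $L^n = 0$ (because $\nu(K_X) = n-1$), the coefficient of $m^n$ vanishes; the identities above kill the $m^{n-1}$ coefficient; and the cancellation $L^{n-2} c_1^2(Y) = L^{n-2} K_Y^{2} = 0$ reduces the $m^{n-2}$ coefficient to $L^{n-2} \cdot c_2(Y) / \bigl( 12\,(n-2)! \bigr)$. Under assumption (a) this is nonzero, and by Miyaoka's generic nefness theorem applied with the nef polarization $L$ it is nonnegative, hence strictly positive; thus $\chi(Y, mL) \to +\infty$ like $m^{n-2}$. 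Under (b) the $m^{n-2}$ coefficient vanishes, and the $m^{n-3}$ coefficient --- coming from $\mathrm{td}_3(Y) = c_1 c_2/24$ --- is a nonzero multiple of $L^{n-3} \cdot K_Y \cdot c_2(Y)$; a sign analysis exploiting the equality case $L^{n-2} c_2(Y) = 0$ of Miyaoka's inequality pins down the desired positivity.

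The central obstacle is to pass from growth of $\chi$ to growth of $h^0$: one needs $h^i(Y, mL) = o(m^{n-2})$ in case (a), resp.\ $o(m^{n-3})$ in case (b), for $i \geq 1$. Serre duality identifies $h^n(Y, mL) = h^0(Y, K_Y - mL)$, which stabilizes as $m$ grows because $K_Y - mL$ is not pseudoeffective along the $L^{n-1}$-direction for $m$ large. For $0 < i < n$ one applies the positivity-of-direct-images and Hodge-theoretic techniques of \cite{LP16}, adapted to $L = \pi^* K_X$ nef of numerical dimension $n-1$. Combined with the positive growth of $\chi$ above, this yields $h^0(Y, mL) > 0$ for some $m > 0$, of order $m^{n-2}$ in case (a), so in particular $\kappa(X, K_X) \geq n-2$.

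With $\nu(X, K_X) = n-1$ and $\kappa(X, K_X) \geq n-2$ the abundance defect is at most one, and the abundance equality $\kappa = \nu$ then follows from Iitaka-fibration analysis --- the generic fiber has dimension at most two and numerically trivial canonical class, a setting in which abundance is classical --- after which the base-point-free theorem upgrades abundance to semiampleness of $K_X$. The hard part is thus the cohomology-control step: for $L$ nef with $\nu = n-1$ but not big, the required subleading bounds on $h^i(Y, mL)$ are not available from classical Kawamata-Viehweg vanishing and rest on the positivity-of-direct-images toolbox developed in \cite{LP16}.
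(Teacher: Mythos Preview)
Your Riemann-Roch setup and the use of terminality to kill the top three coefficients are exactly right, and Miyaoka's inequality is the correct input for the sign in case~(a). But you misidentify where the difficulty lies.

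The cohomology control you flag as ``the hard part'' is in fact elementary. Since $mK_X \sim_\Q K_X + (m-1)K_X$ with $(m-1)K_X$ nef of numerical dimension $n-1$, the refined Kawamata--Viehweg vanishing (Lemma~\ref{lem:KVvanishing}, which goes back to \cite{Kaw82} and is recorded in \cite[Corollary~10.38(2)]{Kol13}) gives $H^i(X,\OO_X(mK_X))=0$ for all $i\geq 2$ outright, not merely asymptotically. Hence $h^0(X,mK_X)\geq \chi(X,mK_X)$, and no positivity-of-direct-images machinery from \cite{LP16} is needed. The same lemma applies to $K_Y+m\pi^*K_X$ on $Y$. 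You never need to bound $h^1$ at all.

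Your treatment of case~(b) has a genuine gap. The equality case of Miyaoka's inequality does not determine the sign of $L^{n-3}\cdot K_Y\cdot c_2(Y)$; since $K_Y=\pi^*K_X+E$ with $E$ exceptional, this number depends on $L^{n-3}\cdot E\cdot c_2(Y)$, which is uncontrolled. The paper does not attempt to pin down the sign. Instead it runs Riemann--Roch for \emph{two} line bundles, $\OO_X(mK_X)$ and $\OO_Y(K_Y+m\pi^*K_X)$, whose $m^{n-3}$ coefficients are $\mp\frac{1}{24(n-3)!}L^{n-3}\cdot K_Y\cdot c_2(Y)$ with \emph{opposite} signs. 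Whichever sign occurs, one of the two Euler characteristics grows like $m^{n-3}$, and the vanishing above converts this into $h^0$-growth; a short argument then transfers $h^0(Y,K_Y+m\pi^*K_X)\geq Cm^{n-3}$ back to $\kappa(X,K_X)\geq n-3$.

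Finally, the passage from $\kappa(X,K_X)\geq n-2$ (resp.\ $n-3$) to semiampleness is Theorem~\ref{thm:kaw} (Lai's result \cite{Lai11}): since the general Iitaka fibre has dimension at most $2$ (resp.\ $3$), good models exist there and $K_X$ is semiample. Your Iitaka-fibration sketch is in the right spirit but should cite this directly.
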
 

The result is more precise when $n=4$, see Theorem \ref{thm:nu3}, which then implies Theorem \ref{main} when $\nu(X,K_X)=3$. The proof is by a careful analysis of the Hirzebruch-Riemann-Roch for two different sets of line bundles, together with a well-known slight refinement of the Kawamata-Viehweg vanishing.

Finally, we note that results from \cite{LP16} immediately give the following.

\begin{thm} 
Let $X$ be a projective manifold of dimension $n$ with $K_X$ pseudoeffective. Assume that $K_X$ has a metric with algebraic singularities and semipositive curvature current.
\begin{enumerate}
\item[(i)] If good minimal models for klt pairs exist in dimensions at most $n-1$, then there is a positive integer $m$ such that
$$ H^0\big(X,(\Omega^1_X)^{\otimes m}\big) \neq 0. $$
\item[(ii)] If $n=4$, then there is a positive integer $m$ such that
$$ H^0\big(X,(\Omega^1_X)^{\otimes m}\big) \neq 0. $$
\end{enumerate} 
\end{thm}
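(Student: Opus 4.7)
The strategy I would take is to reduce both parts to the assertion that $\kappa(X, K_X) \geq 0$, and then to invoke abundance-type results from \cite{LP16}.

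For the reduction, I would use the standard injection of line bundles $\omega_X^{\otimes m} \hookrightarrow (\Omega^1_X)^{\otimes nm}$ obtained by tensoring $m$ copies of the antisymmetrization $\det \Omega^1_X \hookrightarrow (\Omega^1_X)^{\otimes n}$, where $n = \dim X$; this is essentially the content of Remark \ref{rem:tensor}. Hence a nonzero section of $mK_X$ for some $m \geq 1$ produces a nonzero section of a tensor power of $\Omega^1_X$, and it suffices to show that $\kappa(X, K_X) \geq 0$.

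To produce such a section, I would exploit the algebraic-singularities assumption: after a suitable log resolution $\mu \colon Y \to X$ of the singular ideal of the given metric, the analytic decomposition of $K_X$ yields an algebraic one,
$$ \mu^* K_X \equiv P + N, $$
with $P$ a nef $\Q$-Cartier divisor (arising from the smooth part of the metric) and $N$ an effective $\Q$-divisor with SNC support (the divisorial part of the singularities). The pair $(Y, \epsilon N)$ is klt for small $\epsilon > 0$, and the decomposition is of the nef-plus-effective type studied in \cite{LP16}. The main results there should show that, granting good minimal models for klt pairs in dimensions at most $n-1$, the canonical class $K_X$ is abundant, and in particular $\kappa(X, K_X) \geq 0$. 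Combined with the first step, this proves (i). Part (ii) then follows at once: for $n = 4$, good minimal models for klt pairs in dimension at most three are known by the classical three-dimensional MMP.

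The main obstacle would ordinarily be the abundance step underlying (i), but it is absorbed by the machinery of \cite{LP16}; once the nef-plus-effective decomposition above is matched with the hypotheses of the relevant theorem there, essentially nothing remains to be verified, which is why the authors describe the present result as an immediate consequence of their earlier work.
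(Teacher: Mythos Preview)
Your reduction step is fine, but the core of the argument has a genuine gap. You assert that the results of \cite{LP16} give $\kappa(X,K_X)\geq 0$ (even abundance) once you have the nef-plus-effective decomposition coming from the metric with algebraic singularities, assuming good minimal models in lower dimensions. However, the relevant input from \cite{LP16} is exactly Theorem~\ref{thm:LP4.3} in this paper, and it carries the extra hypothesis $\chi(X,\OO_X)\neq 0$. Without that hypothesis the nonvanishing $\kappa(X,K_X)\geq 0$ is not known in this generality; dropping it would amount to proving the nonvanishing conjecture in these cases, which is precisely what one does not have.

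The paper's proof supplies this missing hypothesis by arguing contrapositively: if $H^0\big(X,(\Omega^1_X)^{\otimes m}\big)=0$ for all $m\geq 1$, then in particular $H^0(X,\Omega^q_X)=0$ for all $q\geq 1$, so by Hodge symmetry $h^q(X,\OO_X)=0$ for $q\geq 1$ and hence $\chi(X,\OO_X)=1\neq 0$. Now Theorem~\ref{thm:LP4.3} applies and yields $\kappa(X,K_X)\geq 0$, contradicting the assumed vanishing via the injection you describe. Your argument becomes correct once you insert this Hodge-theoretic step to secure $\chi(X,\OO_X)\neq 0$; as written, the direct appeal to \cite{LP16} does not go through.
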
 

This is Theorem \ref{thm4} below. It is expected that the assumptions in the theorem always hold, see Remark \ref{rem:metric}.

All the results of this paper apply also in the context of a stronger conjecture from \cite{BC15} mentioned above.
 
\section{Prelimimaries} \label{prelim}

We work over the complex numbers, and all varieties are normal and projective. For the basic notions of the Minimal Model Program we refer to \cite{KM98}. In particular, a normal projective variety is {\it terminal} if it has terminal singularities. We shortly review the definition of the numerical dimension of a pseudoeffective divisor \cite{Nak04,Kaw85}; we are mostly interested in the case when the divisor is $K_X$. 

\begin{dfn}\label{dfn:kappa}
Let $X$ be a smooth projective variety and let $D$ be a pseudoeffective $\Q$-divisor on $X$. If we denote
$$\sigma(D,A)=\sup\big\{k\in\N\mid \liminf_{m\rightarrow\infty}h^0(X, \mathcal O_X(\lfloor ( mD\rfloor+A))/m^k >0\big\}$$
for a Cartier divisor $A$ on $X$, then the {\em numerical dimension\/} of $D$ is
$$\nu(X,D)=\sup\{\sigma(D,A)\mid A\textrm{ is ample}\}.$$
Note that this coincides with various other definitions of the numerical dimension by \cite{Leh13,Eck16}. 
If $X$ is a projective variety and if $D$ is a pseudoeffective $\Q$-Cartier $\Q$-divisor on $X$, then we set $\nu(X,D)=\nu(Y,f^*D)$ 
for any birational morphism $f\colon Y\to X$ from a smooth projective variety $Y$. 
\end{dfn}

If $D$ is nef, then $\nu(X,D)$ is the largest positive integer $e$ such that $ D^e  \not \equiv 0$. Using a refined intersection theory, this can be generalized to pseudoeffective divisors \cite{BDPP}. One of the most important properties we use is that the numerical dimension is preserved by the operations of a Minimal Model Program.

The following well-known result \cite[Corollary 10.38(2)]{Kol13} is a consequence of the usual Kawamata-Viehweg vanishing theorem. The proof is analogous to \cite[Corollary]{Kaw82}; we include a short argument for the convenience of the reader. 

\begin{lem}\label{lem:KVvanishing}
Let $(X,\Delta)$ be a $\Q$-factorial projective klt pair of dimension $n$ and let $D$ be a Cartier divisor on $X$ such that $D\sim_\Q K_X+\Delta+L$, where $L$ is a nef $\Q$-divisor with $\nu(X,L)=k$. Then
$$H^i\big(X,\OO_X(D)\big) =0\quad\text{for all }i>n-k.$$
\end{lem}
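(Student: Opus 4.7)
My plan is to induct on $n = \dim X$. The trivial boundaries are $n = 0$, which is vacuous, and $k = 0$, for which the statement $H^i = 0$ for $i > n$ holds for dimensional reasons. The base case of the induction is $k = n$, where $L$ is nef with $\nu(X,L) = n$ and hence big and nef; here the classical Kawamata-Viehweg vanishing theorem applied to $D \sim_\Q K_X + \Delta + L$ yields $H^i(X, \OO_X(D)) = 0$ for all $i > 0 = n - k$. So the only substantive range is $1 \le k \le n-1$, which I handle by restricting to a general hyperplane section.

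Choose a sufficiently divisible very ample divisor $H$ on $X$ and a general member of $|H|$, still denoted $H$. By Bertini, the pair $(X, \Delta + H)$ is plt, so by adjunction $(H, \Delta|_H)$ is klt. The restriction $L|_H$ is nef on $H$, and one verifies $\nu(H, L|_H) \ge k$ via the intersection computation
$$
(L|_H)^k \cdot (B|_H)^{n-1-k} \;=\; L^k \cdot B^{n-1-k} \cdot H \;>\; 0
$$
for any ample $B$ on $X$, using that $L^k \cdot (\text{ample})^{n-k} > 0$ since $L$ is nef with $\nu(X,L) = k$. By adjunction one also has
$$
(D + H)|_H \sim_\Q K_H + \Delta|_H + L|_H,
$$
so the inductive hypothesis applied on $H$ (of dimension $n-1$) gives
$$
H^i\bigl(H, \OO_H((D+H)|_H)\bigr) = 0 \quad \text{for every } i > (n-1) - k.
$$

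To transfer this to $X$, I use the short exact sequence
$$
0 \to \OO_X(D) \to \OO_X(D+H) \to \OO_H\bigl((D+H)|_H\bigr) \to 0
$$
obtained by multiplication by the section defining $H$. Since $L + H$ is ample, the classical Kawamata-Viehweg vanishing applied to $D + H \sim_\Q K_X + \Delta + (L + H)$ yields $H^i(X, \OO_X(D + H)) = 0$ for all $i > 0$. Combined with the vanishing just established on $H$, the long exact sequence in cohomology forces $H^i(X, \OO_X(D)) = 0$ for every $i > n - k$, which closes the induction.

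The only points requiring care are the preservation of klt-ness under hyperplane restriction and the inequality $\nu(H, L|_H) \ge k$, but both are routine consequences of Bertini together with the choice of $H$ sufficiently positive. I therefore do not expect any serious technical obstacle beyond these standard verifications.
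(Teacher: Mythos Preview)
Your argument is correct and follows essentially the same route as the paper: induct on $n$, settle the case $k=n$ by ordinary Kawamata--Viehweg vanishing, and for $k<n$ cut by a general very ample $H$, use adjunction to put the restricted data in the inductive setup, and read off the desired vanishing from the long exact sequence of $0\to\OO_X(D)\to\OO_X(D+H)\to\OO_H(D+H)\to0$. The only cosmetic differences are that the paper simply asserts $\nu(H,L|_H)=k$ while you verify the inequality $\nu(H,L|_H)\ge k$ via intersection numbers (which is all that is needed), and the paper invokes Kawamata--Viehweg for $H^i\big(X,\OO_X(D+H)\big)$ only in the range $i>n-k$ rather than using ampleness of $L+H$ to kill all $i>0$.
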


\begin{proof}
The proof is by induction on $n$. If $k=n$, then this is the usual Kawamata-Viehweg vanishing \cite[Theorem 1-2-5 and Remark 1-2-6]{KMM87}. Now, assume that $k<n$ and let $H$ be an irreducible very ample divisor on $X$ which is general in the linear system $|H|$. Consider the exact sequence
\begin{equation}\label{eq:seq}
0\to\OO_X(D)\to\OO_X(D+H)\to\OO_H(D+H)\to 0.
\end{equation}
For $i>n-k$ we have $H^i\big(X,\OO_X(D+H)\big)=0$ by Kawamata-Viehweg vanishing. Since 
$$(D+H)|_H\sim_\Q K_H+\Delta|_H+L|_H$$
by the adjunction formula, see e.g.\ \cite[Proposition 4.5]{Kol13}, since the pair $(H,\Delta|_H)$ is klt by \cite[Lemma 5.17]{KM98} and since $\nu(H,L|_H)=k$, we have 
$$H^{i-1}\big(H,\OO_H(D+H)\big)=0$$ 
by induction. Then the result follows from the long exact sequence in cohomology associated to \eqref{eq:seq}.
\end{proof}

We frequently use the following theorem \cite[Theorem 4.4]{Lai11}.

\begin{thm} \label{thm:kaw} 
Assume the existence of good models in dimension $n-q$. Let $X$ be a minimal terminal projective variety of dimension $n$. If $\kappa (X,K_X) = q$, then $K_X$ is semiample. 
\end{thm}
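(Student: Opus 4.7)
The plan is to use the Iitaka fibration of $K_X$ to reduce the semiampleness question to a problem on a base of dimension $q$, where a base-point free theorem will finish the job. Since $\kappa(X,K_X)=q\geq 0$, the Iitaka fibration is defined, so after a log resolution we obtain a smooth model $\pi\colon\widetilde X\to X$ equipped with a surjective morphism $f\colon\widetilde X\to Z$ onto a smooth projective variety $Z$ of dimension $q$, whose general fiber $F$ is smooth of dimension $n-q$ and satisfies $\kappa(F,K_F)=0$. Because $X$ has terminal singularities, $K_{\widetilde X}=\pi^*K_X+E$ with $E$ effective and $\pi$-exceptional, so proving semiampleness of $K_{\widetilde X}$ up to such exceptional terms is equivalent to proving semiampleness of $K_X$.

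Next I would invoke the good-model hypothesis on the $(n-q)$-dimensional fiber $F$. Its canonical class is pseudoeffective (as the restriction of a nef class) and $\kappa(F,K_F)=0$, so the hypothesis produces a good minimal model $F'$ on which $K_{F'}$ is semiample; combined with $\kappa=0$ this forces $K_{F'}$, and hence also $K_F$ on a common resolution, to be $\Q$-linearly trivial. With the general fibers of $f$ having $\Q$-trivial canonical class, the canonical bundle formula of Fujino--Mori applies: after possibly a further birational modification of $Z$, we obtain
$$K_{\widetilde X}\sim_\Q f^*(K_Z+B_Z+M_Z)+E',$$
where $E'$ is effective $f$-exceptional, $(Z,B_Z)$ is a klt pair, and $M_Z$ is a nef $\Q$-divisor (the moduli part).

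To finish, observe that $K_Z+B_Z+M_Z$ is big on $Z$ (since $\kappa(X,K_X)=q=\dim Z$ and the canonical bundle formula transfers sections) and nef (since $K_X$ is nef and $E'$ is $f$-exceptional). The final step is a base-point free theorem applied to the generalized klt pair $(Z,B_Z+M_Z)$: either one uses a base-point free statement that accommodates the extra nef class $M_Z$ directly, or one passes to a suitable finite cover of $Z$ on which $M_Z$ becomes $\Q$-linearly trivial (possible precisely because the fibers have torsion canonical class, so the moduli $b$-divisor descends to something torsion after a cover), and then invokes the usual Kawamata base-point free theorem. Either route yields that $K_Z+B_Z+M_Z$ is semiample, hence $K_{\widetilde X}$ and then $K_X$ is semiample.

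The principal technical obstacle is the canonical bundle formula itself: verifying that the moduli part $M_Z$ is nef on some birational model of $Z$ is the deep theorem of Fujino--Mori/Ambro built on variations of Hodge structures, and the argument crucially uses the $\Q$-triviality of $K_F$ supplied by the good-model hypothesis. A secondary point is arranging the base-point free theorem in the presence of the extra nef class $M_Z$; this is handled by perturbing by a small ample divisor to absorb $M_Z$ into the boundary of a klt pair, or by the cover trick above.
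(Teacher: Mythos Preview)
The paper does not prove this theorem; it quotes it from Lai. Your sketch follows the same architecture as Lai's argument (Iitaka fibration, canonical bundle formula, base-point-free theorem on the base), but it skips the step that is the actual content of that argument. The gap is the sentence ``hence also $K_F$ \dots\ to be $\Q$-linearly trivial'': if $F'$ is a good minimal model of $F$ with $K_{F'}\sim_\Q 0$, then on a common resolution the canonical class is $\Q$-linearly equivalent to an \emph{effective exceptional} divisor, not to zero, and $K_F$ itself is only $\Q$-effective. More importantly, the Fujino--Mori formula requires $K_{\widetilde X}+\Delta\sim_{\Q,f}0$, i.e.\ genuine $\Q$-linear triviality over $Z$; knowing that each general fibre separately admits a good minimal model with torsion canonical class does not produce this. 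What Lai actually shows is that the good-model hypothesis on the general fibre allows one to run a $K$-MMP with scaling on (a resolution of) $X$ over $Z$ that terminates with a relatively good model $X'\to Z$, on which $K_{X'}\sim_{\Q,Z}0$; only then does the canonical bundle formula apply. This relative MMP and its termination are the substance of the result and cannot be bypassed.

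A smaller issue: your claim that $M_Z$ becomes torsion after a finite base change is not known in general --- this is essentially the $b$-semiampleness conjecture of Prokhorov--Shokurov. Your alternative route (use bigness of $K_Z+B_Z+M_Z$ to absorb the nef $M_Z$ and invoke the ordinary base-point-free theorem for the klt pair $(Z,B_Z)$) is the one that actually works.
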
 

As promised in the introduction, we show the equivalence of Mumford's conjecture and the weaker Conjecture \ref{mum2}.

\begin{pro}\label{pro:mumford}
Assume that Conjecture \ref{mum2} holds in dimensions at most $n$. Then Conjecture \ref{mum} holds in dimension $n$.
\end{pro}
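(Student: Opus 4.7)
The plan is to argue by contrapositive: assume Conjecture \ref{mum2} holds in dimensions at most $n$, let $X$ be a projective manifold of dimension $n$ with $H^0\big(X,(\Omega^1_X)^{\otimes m}\big)=0$ for all $m\geq 1$, and suppose for contradiction that $X$ is not rationally connected; the goal is to exhibit a nonzero section of some $(\Omega^1_X)^{\otimes m}$.

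First I would apply the contrapositive of Conjecture \ref{mum2} to $X$ itself: the vanishing hypothesis forces $K_X$ to be not pseudoeffective, so by \cite{BDPP} the manifold $X$ is uniruled. Next I would invoke the maximal rationally connected (MRC) fibration of $X$ from \cite[Chapter IV]{Kol96}; after passing to a suitable smooth projective birational model $\pi\colon \tilde X\to X$, this becomes a morphism $\tilde f\colon \tilde X\to Y$ onto a smooth projective variety $Y$. Since $X$ is uniruled but assumed not rationally connected, $1\leq\dim Y\leq n-1$. The key input at this point is the theorem of Graber--Harris--Starr, which guarantees that the base of the MRC quotient is never uniruled; hence $K_Y$ is pseudoeffective, again by \cite{BDPP}.

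Since $\dim Y\leq n-1$, the inductive hypothesis (Conjecture \ref{mum2} in dimensions at most $n$) applied to $Y$ yields an integer $m\geq 1$ and a nonzero section
$$s\in H^0\big(Y,(\Omega^1_Y)^{\otimes m}\big).$$
Pulling back by $\tilde f$ produces a nonzero section $\tilde f^*s$ of $\tilde f^*(\Omega^1_Y)^{\otimes m}\cong(\tilde f^*\Omega^1_Y)^{\otimes m}$. Because $\tilde f$ is dominant between smooth projective varieties, the differential
$$\tilde f^*\Omega^1_Y\to\Omega^1_{\tilde X}$$
is generically an injection of locally free sheaves; since both sheaves are torsion free, this map is injective, and remains so after taking $m$-th tensor powers. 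Composing, I obtain a nonzero element of $H^0\big(\tilde X,(\Omega^1_{\tilde X})^{\otimes m}\big)$.

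Finally I would invoke the standard birational invariance
$$H^0\big(\tilde X,(\Omega^1_{\tilde X})^{\otimes m}\big)\cong H^0\big(X,(\Omega^1_X)^{\otimes m}\big):$$
a section on $\tilde X$ restricts to $\tilde X\setminus\Exc(\pi)$, transports via $\pi$ to a section of $(\Omega^1_X)^{\otimes m}$ on the complement of $\pi(\Exc(\pi))$, and then extends by Hartogs across this set of codimension at least two in the smooth variety $X$. This contradicts our starting hypothesis on $X$. The only genuinely non-formal ingredients are Graber--Harris--Starr (to ensure the MRC base is not uniruled) and \cite{BDPP} (to translate uniruledness into non-pseudoeffectivity of the canonical class); the rest is sheaf-theoretic bookkeeping, and I do not anticipate a real obstacle.
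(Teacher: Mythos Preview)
Your proof is correct and follows essentially the same route as the paper's: apply \cite{BDPP} together with Conjecture \ref{mum2} to $X$ to get uniruledness, resolve the MRC fibration to a morphism, use Graber--Harris--Starr plus \cite{BDPP} to make the base's canonical bundle pseudoeffective, apply Conjecture \ref{mum2} to the base, and pull back the resulting tensor form. The only cosmetic difference is that the paper absorbs the blowup into $X$ (tacitly using the birational invariance of $H^0\big(X,(\Omega^1_X)^{\otimes m}\big)$), whereas you keep $X$ and $\tilde X$ separate and make that invariance explicit.
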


\begin{proof}
We follow closely the proof of \cite[Proposition IV.5.7]{Kol96}. Let $X$ be a projective manifold of dimension $n$ such that 
$$ H^0\big(X,(\Omega^1_X)^{\otimes m}\big) = 0\quad\text{for all } m \geq 1.$$
Then $X$ is uniruled by Conjecture \ref{mum2} and by \cite{BDPP}, and let $\pi\colon X \dashrightarrow Z$ be an MRC fibration of $X$, see \cite[\S IV.5]{Kol96}. By blowing up $X$ and $Z$, we may additionally assume that $\pi$ is a morphism. By \cite[Corollary 1.4]{GHS03}, $Z$ is not uniruled. If $X$ is not rationally connected, then $\dim Z\geq1$ and $K_Z$ is pseudoeffective by \cite{BDPP}, hence
$$ H^0\big(Z,(\Omega^1_Z)^{\otimes m_0}\big) \neq 0 $$
for some positive integer $m_0$ by Conjecture \ref{mum2}. Since
$$(\pi^*\Omega_Z^1)^{\otimes m_0}\subseteq(\Omega_X^1)^{\otimes m_0},$$
we obtain $H^0\big(X,(\Omega^1_X)^{\otimes m_0}\big) \neq 0 $, a contradiction.
\end{proof}

\begin{rem}\label{rem:tensor}
We often use without explicit mention that any effective tensor representation of a vector bundle $\mathcal E$ on a variety $X$ can be embedded as a submodule in its high tensor power, see \cite[Chapter III, \S6.3 and \S7.4]{Bou98}. In particular, if $H^0\big(X,(\bigwedge^q\mathcal E)^{\otimes p}\big)\neq0$ for some $p,q>0$, then $H^0\big(X,\mathcal E^{\otimes m}\big) \neq 0 $ for some $m>0$.
\end{rem}

We finish the section by commenting on log and singular cases. 

\begin{rem} 
(1) Let  $(X,\Delta)$ be a projective klt pair and let $\pi\colon \widehat X \to X$ be a log resolution.  Assume that $-(K_X + \Delta)$ nef and that 
$$ H^0\big(\widehat X, \big(\Omega^1_{\widehat X}\big)^{\otimes m}\big) = 0 $$
for all positive integers $m$. Then $\widehat X$ and $X$ are rationally connected. Indeed, if $K_{\widehat X}$ is pseudoeffective, then $X$ has canonical singularities and $\Delta = 0$. In this case $K_X\sim_\Q0$ and hence $ H^0\big(\widehat X, \big(\Omega^1_{\widehat X}\big)^{\otimes m}\big) \neq0 $ for some $m$, contradicting our assumption. Therefore $K_{\widehat X}$ is not pseudoeffective and $\widehat X$ is uniruled by \cite{BDPP}. Let $\widehat X \dasharrow Z$ be an MRC fibration to a projective manifold $Z$. By \cite[Main Theorem]{Zh05} we have $\kappa (Z,K_Z) = 0$, and we conclude that $\dim Z= 0$ as in the proof of Proposition \ref{pro:mumford}. Therefore $\widehat X$ as well as $X$ are rationally connected. 

(2) In a singular setting, one might hope to characterize rational connectedness using reflexive differentials. However, \cite[Example 3.7]{GKP14} constructs a rational surface $X$ with only rational double points such that 
$$ H^0\big(X,((\Omega^1_X)^{\otimes 2})^{**}\big) \neq 0.$$ 
\end{rem} 

\section{Numerical dimension 1}\label{sec:nd1}

The basis of this section is the following result \cite[Theorem 6.7]{LP16}.

\begin{thm}\label{thm:nu1}
Let $X$ be a minimal $\Q$-factorial projective terminal variety such that $\nu(X,K_X)=1$. If $\chi(X,\OO_X)\neq0$, then $\kappa(X,K_X)\geq0$.
\end{thm}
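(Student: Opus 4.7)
The plan is to combine Hirzebruch--Riemann--Roch on a resolution with the refined Kawamata--Viehweg vanishing of Lemma~\ref{lem:KVvanishing} and Serre duality, exploiting the very strong numerical constraints forced by the hypothesis $\nu(X, K_X) = 1$. After restricting to Cartier multiples of $K_X$, my goal is to show that if $\chi(X, \OO_X) \neq 0$, then $h^0(X, \OO_X(mK_X)) \neq 0$ for some positive $m$.

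First, since $X$ has terminal, hence rational, singularities, I may compute $\chi(X, \OO_X(mK_X))$ via HRR on a resolution $\pi \colon Y \to X$, using $\chi(X, L) = \chi(Y, \pi^* L)$ for Cartier $L$. The hypothesis that $K_X$ is nef with $\nu(X, K_X) = 1$ means exactly that $K_X^j \equiv 0$ for every $j \geq 2$ as a class modulo numerical equivalence, so by the projection formula each term $(\pi^* K_X)^j \cdot T_{n-j}(Y) = K_X^j \cdot \pi_* T_{n-j}(Y)$ vanishes for $j \geq 2$. The HRR expansion collapses to an affine polynomial
\[
\chi\big(X, \OO_X(mK_X)\big) \,=\, \chi(X, \OO_X) + m\,\beta
\]
for some $\beta \in \Q$.

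Second, I would apply Lemma~\ref{lem:KVvanishing} with $D = mK_X$, $\Delta = 0$ and the nef divisor $L = (m-1)K_X$ of numerical dimension $1$ (for $m \geq 2$) to obtain $H^n(X, \OO_X(mK_X)) = 0$ for every $m \geq 2$. Via Serre duality on $X$ (valid through the index-one cover on the $\Q$-Gorenstein variety), this translates to $h^0(X, \OO_X(-kK_X)) = 0$ for all $k \geq 1$, and moreover gives the functional equation $\chi(X, \OO_X(mK_X)) = (-1)^n \chi(X, \OO_X((1-m)K_X))$. Combined with the affine form above, this forces a rigid relation between $\beta$ and $\chi(X, \OO_X)$ whose exact shape depends on the parity of $n$, and already rules out many configurations.

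The main obstacle is the final step: converting the non-vanishing of $\chi(X, \OO_X)$ into non-vanishing of $H^0(X, \OO_X(mK_X))$ when the intermediate cohomologies $H^1, \dots, H^{n-1}$ are not a priori controlled. The natural strategy is to exploit the fact that $\nu(X, K_X) = 1$ should yield, after a suitable birational modification, a nef reduction $f \colon X \dashrightarrow C$ onto a curve with $K_X$ numerically proportional to a fibre class; Koll\'ar-type torsion-freeness and decomposition theorems applied to $R^i f_* \OO_X(mK_X)$ would then reduce the cohomological analysis to a one-dimensional base on which the hypothesis $\chi(X, \OO_X) \neq 0$ can be cashed in against the Euler characteristic identity above. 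Making this rigorous in the non-semiample regime, handling the terminal singularities of $X$, and reconciling the geometric fibration with the HRR computation is precisely the delicate ground developed in \cite{LP16}.
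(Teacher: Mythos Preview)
Your opening moves are correct: with $\nu(X,K_X)=1$ the Hirzebruch--Riemann--Roch polynomial is indeed affine in $m$, and Lemma~\ref{lem:KVvanishing} kills $H^n$. But as you yourself recognise, this leaves the intermediate groups $H^1,\dots,H^{n-1}$ completely uncontrolled, and the affine identity $\chi(mK_X)=\chi(\OO_X)+m\beta$ together with the Serre-duality functional equation cannot by itself force $h^0(mK_X)>0$: nothing prevents $h^0$ and $h^1$ from both growing while their difference stays bounded. Your proposed rescue via a nef reduction to a curve and Koll\'ar-type decomposition is speculative --- the nef reduction is only almost holomorphic, $K_X$ is not known to be $f$-numerically trivial on a regular morphism, and torsion-freeness results for $R^if_*$ do not obviously give the needed vanishing in the non-semiample situation. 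More importantly, this is \emph{not} the route taken in \cite{LP16}, so the appeal to that reference at the end does not close the gap.

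The argument sketched in the paper (following \cite{LP16}) proceeds quite differently. One assumes $\kappa(X,K_X)=-\infty$ and first uses the birational stability of the cotangent bundle \cite{CP11,CP15}, together with a criterion specific to $\nu=1$ (if $L=P+D$ with $L$ nef, $\nu(L)=1$, $P$ pseudoeffective, $D>0$ effective, then $\kappa(L)\geq 0$), to prove that $H^0\big(Y,\Omega^p_Y\otimes\OO_Y(m\pi^*K_X)\big)=0$ for all $p$ and all sufficiently divisible $m\neq 0$. One then analyses singular metrics on $\pi^*\OO_X(mK_X)$: either some multiplier ideal is nontrivial, and the criterion above already gives $\kappa\geq 0$; or every multiplier ideal is trivial, and then the Hard Lefschetz theorem of \cite{DPS01} forces $H^q\big(Y,\OO_Y(K_Y+m\pi^*K_X)\big)=0$ for \emph{every} $q$. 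It is this simultaneous vanishing of all cohomology groups --- not just the top one coming from Kawamata--Viehweg --- that, combined with Hirzebruch--Riemann--Roch, yields $\chi(X,\OO_X)=0$. The key tools you are missing are therefore the Campana--P\u{a}un results on pseudoeffectivity of quotients of $\Omega^1$ and the analytic Hard Lefschetz input from \cite{DPS01}.
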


We give some comments on the proof of Theorem \ref{thm:nu1} in \cite{LP16}. Assuming for contradiction that $\kappa(X,K_X)=-\infty$, the 
main step is to show that then for a resolution $\pi\colon Y\to X$, for all $m \neq 0$ sufficiently divisible and for all $p$ we have
\begin{equation}\label{eq:20}
H^0\big(Y,\Omega^p_Y \otimes \OO_Y(m\pi^*K_X)\big)=0.
\end{equation}
There are two crucial inputs here: the first one is the birational stability of the cotangent bundle \cite{CP11,CP15}; the second is a criterion which says that if a nef Cartier divisor $L$ 
with $\nu(X,L)=1$ can be written as $L=P+D$, where $P$ is a pseudoeffective divisor and $D\neq0$ is an effective divisor, then $\kappa(X,L)\geq0$. 

Now we distinguish two cases: if there exists a positive integer $m$ such that $\pi^*\OO_X(mK_X)$ has a singular metric $h_m$ such that the multiplier ideal $\mathcal I(h_m)$ does 
not equal $\OO_Y$, then one uses the criterion above to conclude; note that in this case the assumption $\chi(X,\OO_X) \neq0$ is not needed. Otherwise, for each $m$ there is a  
singular metric $h_m$ as above such that $\mathcal I(h_m)=\OO_Y$, and then the Hard Lefschetz theorem from \cite{DPS01} gives
$$H^q\big(Y,\OO_Y(K_Y+m\pi^*K_X)\big)=0\quad\text{for all }q.$$
An easy argument involving the Hirzebruch-Riemann-Roch allows to conclude $\chi(X,\OO_X) = 0$, which gives a contradiction.

\medskip

The theorem implies quickly the main result of this section.

\begin{thm} \label{thm:nu11}
Let $X$ be a projective manifold such that $K_X$ is pseudo-effective, and assume that $X$ has a minimal model. If $\nu(X,K_X) =1$, then there exists a positive integer $m$ such that 
$$ H^0\big(X,(\Omega^1_X)^{\otimes m}\big) \neq 0. $$
In particular, Conjecture \ref{mum2} holds if $\dim X=4$ and $\nu(X,K_X)=1$.
\end{thm}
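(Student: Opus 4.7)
The plan is to argue by contradiction, exploiting a dichotomy between the vanishing of $H^0(X,\Omega^p_X)$ and the nonvanishing of $\chi(X,\mathcal{O}_X)$ that feeds into Theorem~\ref{thm:nu1}. Suppose $H^0\big(X,(\Omega^1_X)^{\otimes m}\big)=0$ for every $m\geq1$. First I would observe that by Remark~\ref{rem:tensor}, each exterior power $\Omega^p_X=\Lambda^p\Omega^1_X$ embeds into a high tensor power of $\Omega^1_X$, so $H^0(X,\Omega^p_X)=0$ for every $1\leq p\leq n$, where $n=\dim X$. By Hodge symmetry on the smooth projective variety $X$, this gives $h^p(X,\mathcal{O}_X)=h^{0,p}(X)=h^{p,0}(X)=0$ for all $p\geq1$, and therefore $\chi(X,\mathcal{O}_X)=1\neq0$.

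Next I would transfer this nonvanishing of $\chi(\mathcal{O})$ to a minimal model $Y$ of $X$. Since $Y$ is terminal and hence has rational singularities, a common resolution together with the birational invariance of $\chi(\mathcal{O})$ for smooth projective varieties (via birational invariance of $h^{p,0}$) gives $\chi(Y,\mathcal{O}_Y)=\chi(X,\mathcal{O}_X)=1\neq0$. Moreover, $\nu(Y,K_Y)=\nu(X,K_X)=1$ since numerical dimension is preserved by the MMP. Theorem~\ref{thm:nu1} then yields $\kappa(Y,K_Y)\geq0$. Using that plurigenera agree for birational varieties with canonical singularities (again via a common resolution and the fact that $q_*\mathcal{O}_W(mF)=\mathcal{O}_Y$ for effective exceptional $F$), this gives $\kappa(X,K_X)\geq0$, and hence $H^0(X,mK_X)\neq0$ for some $m\geq1$. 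But $\mathcal{O}_X(mK_X)=(\Lambda^n\Omega^1_X)^{\otimes m}$ is an effective tensor representation of $\Omega^1_X$, so by Remark~\ref{rem:tensor} it embeds into $(\Omega^1_X)^{\otimes N}$ for some $N$, producing a nonzero section and contradicting the initial assumption.

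For the ``in particular'' statement in dimension four, I would simply appeal to the fact that minimal models are known to exist for projective $4$-folds with pseudo-effective canonical class, so that the hypothesis of the first part is automatically satisfied. Overall the argument is short and the main conceptual input is Theorem~\ref{thm:nu1}; the only mild subtlety is the dichotomy that reduces the general case to the one where $\chi(X,\mathcal{O}_X)\neq0$, and I do not anticipate any real obstacle.
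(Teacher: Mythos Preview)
Your proposal is correct and follows essentially the same approach as the paper's proof: assume the vanishing, deduce $\chi(X,\mathcal{O}_X)=1$ via Hodge symmetry, pass to a minimal model $Y$ where $\nu(Y,K_Y)=1$ and $\chi(Y,\mathcal{O}_Y)=1$, invoke Theorem~\ref{thm:nu1} to get $\kappa\geq 0$, and derive a contradiction. The paper's argument is terser (it omits the justifications for the birational invariance of $\chi(\mathcal{O})$ and $\kappa$ that you spell out), but the content is identical.
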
 

\begin{proof} 
Assume to the contrary that 
$$ H^0\big(X,(\Omega^1_X)^{\otimes m}\big) = 0\quad\text{for all } m \geq 1,$$
so that, in particular,
$$ H^q(X,\OO_X)\simeq H^0(X,\Omega^q_X) = 0\quad\text{for all } q \geq 1.$$ 
Therefore $ \chi(X,\OO_X) = 1$. If $Y$ is a minimal model of $X$, then $\nu(Y,K_Y)=1$ and $\chi(Y,\OO_Y) = 1$, hence $\kappa(X,K_X)=\kappa(Y,K_Y)\geq0$ by Theorem \ref{thm:nu1}. This is a contradiction. 

The second statement follows immediately since minimal models of ca\-no\-ni\-cal fourfolds exist by \cite{BCHM,Fuj05}.
\end{proof} 

\section{Numerical codimension 1} 

\begin{thm} \label{thm:nu-n}  
Let $X$ be a minimal terminal $n$-fold with $\nu(X,K_X) = n-1$ and $n \geq 4$, and let $\pi\colon Y \to X$ be a resolution which is an isomorphism over the smooth locus of $X$. Assume one of the following:
\begin{enumerate}
\item[(a)] $(\pi^*K_X)^{n-2}  \cdot c_2(Y) \neq 0$;
\item[(b)] $(\pi^*K_X)^{n-2}  \cdot c_2(Y) = 0$ and $(\pi^*K_X)^{n-3} \cdot K_Y \cdot c_2(Y) \ne 0$. 
\end{enumerate} 
Then $K_X$ is semiample. 
\end{thm}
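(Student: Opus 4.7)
The plan is to show that $\kappa(X,K_X)\ge n-3$; combined with Theorem \ref{thm:kaw} and the fact that good minimal models for klt pairs exist in dimension at most $3$, this forces $K_X$ to be semiample. The bound on the Kodaira dimension will come from analyzing the Hirzebruch--Riemann--Roch polynomials on $Y$ for the two line bundles $m\pi^*K_X$ and $K_Y+m\pi^*K_X$, whose sections push down to sections of $mK_X$ and $(m+1)K_X$ respectively.

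\textbf{Vanishing and reduction to $\chi$.} For $m\ge 2$, the $\Q$-divisor $(m-1)K_X$ is nef with numerical dimension $n-1$, and $(X,0)$ is klt. Lemma \ref{lem:KVvanishing} then gives $H^i(X,\OO_X(mK_X))=H^i(X,\OO_X((m+1)K_X))=0$ for all $i\ge 2$. Because terminal singularities are rational and canonical, $R^i\pi_*\OO_Y=0$ and $R^i\pi_*\omega_Y=0$ for $i>0$ (Grauert--Riemenschneider), with $\pi_*\omega_Y=\omega_X$. The Leray spectral sequence yields
\[ \chi(X,mK_X)=\chi(Y,m\pi^*K_X),\qquad \chi(X,(m+1)K_X)=\chi(Y,K_Y+m\pi^*K_X), \]
and hence
\[ h^0(X,\OO_X(mK_X))\ge\chi(Y,m\pi^*K_X), \quad h^0(X,\OO_X((m+1)K_X))\ge\chi(Y,K_Y+m\pi^*K_X). \]

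\textbf{Intersection vanishings and HRR.} Write $K_Y=\pi^*K_X+E$ with $E$ an effective $\pi$-exceptional $\Q$-divisor. Terminal varieties of dimension $\ge 3$ satisfy $\codim_X\Sing X\ge 3$, so every codimension-two cycle on $X$ supported on $\Sing X$ vanishes; in particular $\pi_*E=0$ and $\pi_*(E^2)=0$. Combined with $(\pi^*K_X)^n=0$ (from $\nu(X,K_X)=n-1$) and the projection formula, this gives
\[ (\pi^*K_X)^{n-1}\cdot K_Y=0 \quad\text{and}\quad (\pi^*K_X)^{n-2}\cdot K_Y^2=0. \]
Using $\mathrm{td}_1(Y)=-K_Y/2$, $\mathrm{td}_2(Y)=(K_Y^2+c_2(Y))/12$ and $\mathrm{td}_3(Y)=-K_Y\cdot c_2(Y)/24$, Hirzebruch--Riemann--Roch reduces to
\[ \chi(Y,m\pi^*K_X)=\frac{(\pi^*K_X)^{n-2}\cdot c_2(Y)}{12(n-2)!}\,m^{n-2}-\frac{(\pi^*K_X)^{n-3}\cdot K_Y\cdot c_2(Y)}{24(n-3)!}\,m^{n-3}+O(m^{n-4}). \]
Applying Serre duality in the form $\chi(Y,K_Y+D)=(-1)^n\chi(Y,-D)$ to this polynomial expansion shows that $\chi(Y,K_Y+m\pi^*K_X)$ has the \emph{same} $m^{n-2}$ coefficient but the \emph{opposite} $m^{n-3}$ coefficient.

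\textbf{Case analysis and main obstacle.} In case (a), Miyaoka's generic semipositivity for the smooth non-uniruled variety $Y$ with the nef class $\pi^*K_X$ gives $(\pi^*K_X)^{n-2}\cdot c_2(Y)\ge 0$, hence $>0$ by hypothesis. Both Euler characteristics then grow like $c\,m^{n-2}$ with $c>0$, so $\kappa(X,K_X)\ge n-2$. In case (b), the $m^{n-2}$ coefficients vanish and the two nonzero $m^{n-3}$ coefficients are opposite in sign, so whichever sign occurs, exactly one of $\chi(Y,m\pi^*K_X)$ and $\chi(Y,K_Y+m\pi^*K_X)$ tends to $+\infty$ at rate $m^{n-3}$; the corresponding $h^0$ then grows at least like $m^{n-3}$, giving $\kappa(X,K_X)\ge n-3$. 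Theorem \ref{thm:kaw} applied with $q=\kappa(X,K_X)$ now concludes. The main obstacle I anticipate is making the two intersection vanishings rigorous using only the codimension bound on $\Sing X$ and the projection formula, and invoking Miyaoka's semipositivity in a form that applies to the nef pullback class $\pi^*K_X$ on the smooth resolution $Y$ of a singular minimal model (where $K_Y$ itself is only big, not nef).
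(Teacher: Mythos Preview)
Your approach is essentially the paper's: the same HRR expansion on $Y$ for $m\pi^*K_X$ and $K_Y+m\pi^*K_X$, the three intersection vanishings coming from $\codim_X\Sing X\ge 3$, Miyaoka for the sign in case (a), Lemma~\ref{lem:KVvanishing} to kill $H^i$ for $i\ge 2$, and Theorem~\ref{thm:kaw} to conclude.

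One technical point needs adjusting. You invoke Lemma~\ref{lem:KVvanishing} on $X$ for the divisor $(m+1)K_X$, but when $X$ is not Gorenstein (which terminal singularities allow), $(m+1)K_X$ is not Cartier for $m$ divisible by the index, so the lemma as stated does not apply. The paper avoids this by applying Lemma~\ref{lem:KVvanishing} on the smooth variety $Y$ to the Cartier divisor $K_Y+m\pi^*K_X$, obtaining $h^0(Y,K_Y+m\pi^*K_X)\ge\chi$ directly; one then passes to $\kappa(X,K_X)\ge n-3$ using that $K_Y-\pi^*K_X=E\ge 0$, so $h^0(Y,(m+1)K_Y)\ge h^0(Y,K_Y+m\pi^*K_X)$ (the paper actually does a slightly more elaborate bootstrap through $\kappa\ge 0$ first, but your direct route works once the vanishing is placed on $Y$). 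The obstacles you anticipated---the projection-formula vanishings and the applicability of Miyaoka on the resolution---are handled exactly as you outline and are not where the subtlety lies.
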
 

\begin{proof} 
Since $X$ has terminal singularities, the singular locus of $X$ is of dimension at most $n-3$ by \cite[Corollary 5.18]{KM98}, hence
\begin{equation}\label{eq:dot}
(\pi^*K_X)^n=(\pi^*K_X)^{n-1} \cdot K_Y = (\pi^*K_X)^{n-2} \cdot K_Y^2 = 0.
\end{equation}
Let $m$ be any positive integer such that $mK_X$ is Cartier. Then by Hirze\-bruch-Riemann-Roch, by Serre duality, by \eqref{eq:dot} and since $X$ has rational singularities, we obtain
\begin{align}
\chi\big(Y,\OO_Y(K_Y &+ \pi^*(mK_X))\big) =  (-1)^n\chi\big(Y,\OO_Y(-\pi^*(mK_X))\big)\label{eq:RR}\\
&= \frac{1}{12(n-2)!} m^{n-2} (\pi^*K_X)^{n-2} \cdot c_2(Y)\notag \\
&+ \frac{1}{24(n-3)!} m^{n-3} (\pi^*K_X)^{n-3} \cdot K_Y \cdot c_2(Y) + O(m^{n-4})\notag
\end{align}
and
\begin{align}
\chi(X,\OO_X(m&K_X))= \chi\big(Y,\OO_Y(\pi^*(mK_X))\big)\label{eq:RR2}\\
&= \frac{1}{12(n-2)!} m^{n-2} (\pi^*K_X)^{n-2} \cdot c_2(Y)\notag \\
&-\frac{1}{24(n-3)!} m^{n-3} (\pi^*K_X)^{n-3} \cdot K_Y \cdot c_2(Y) + O(m^{n-4}).\notag
\end{align}
By Miyaoka's inequality \cite[\S6]{Miy87}, we have
$$ (\pi^*K_X)^{n-2} \cdot c_2(Y)\geq0.$$ 

Suppose first that $(\pi^*K_X)^{n-2} \cdot c_2(Y) > 0$. Since 
\begin{equation}\label{eq:vanish}
H^i\big(X,\OO_X(mK_X)\big) = 0\quad\text{ for }i\geq2 
\end{equation}
by Lemma \ref{lem:KVvanishing}, by \eqref{eq:RR2} there exists a constant $C_1>0$ such that 
$$ h^0\big(X,\OO_X(mK_X)\big) \geq C_1m^{n-2}. $$ 
We conclude that $\kappa (X,K_X) \geq n-2$, hence $K_X$ is semiample by Theorem \ref{thm:kaw}.

From now on suppose that 
$$(\pi^*K_X)^{n-2}  \cdot c_2(Y) = 0\quad\text{and}\quad (\pi^*K_X)^{n-3} \cdot K_Y \cdot c_2(Y) \neq 0.$$ 
If 
$$ (\pi^*K_X)^{n-3} \cdot K_Y \cdot c_2(Y)  < 0, $$
then \eqref{eq:RR2} and \eqref{eq:vanish} imply that there exists a constant $C_2>0$ such that 
$$ h^0\big(X,\OO_X(mK_X)\big) \geq C_2m^{n-3}, $$ 
and $K_X$ is semiample by Theorem \ref{thm:kaw}. If
$$ (\pi^*K_X)^{n-3}  \cdot K_Y \cdot c_2(Y) > 0,$$
then since 
$$ H^i\big(X,\OO_Y(K_Y+\pi^*(mK_X))\big) = 0\quad\text{ for }i\geq2 $$
by Lemma \ref{lem:KVvanishing}, by \eqref{eq:RR} there exists a constant $C_3>0$ such that 
\begin{equation}\label{eq:22}
h^0\big(Y,\OO_Y(K_Y + \pi^*(mK_X))\big) \geq C_3m^{n-3}.
\end{equation}
We claim that then $\kappa(X,K_X)\geq n-3$, and therefore that $K_X$ is semiample as before. Indeed, by \eqref{eq:22} there exists a positive integer $m_0$ and an effective divisor $D$ such that $K_Y + \pi^*(m_0K_X)\sim D$, hence $(m_0+1)K_X\sim_\Q \pi_*D$ and $\kappa(X,K_X)\geq0$. Fix a positive integer $p$ such that $pK_X$ is Cartier and $h^0(X,pK_X)>0$. Then \eqref{eq:22} gives
\begin{multline*}
h^0\big(X,\OO_X(2pmK_X)\big)\geq h^0\big(X,\OO_X(p(m+1)K_X)\big)\\
=h^0\big(Y,\OO_Y(pK_Y + \pi^*(pmK_X))\big)\geq C_3m^{n-3}= C_4(2pm)^{n-3},
\end{multline*}
where $C_4=C_3/(2p)^{n-3}$. This finishes the proof.
\end{proof} 

In dimension $n = 4$ we obtain more precisely: 

\begin{thm} \label{thm:nu3}  
Let $X$ be a minimal terminal $4$-fold with $\nu(X,K_X) = 3$, and let $\pi\colon Y \to X$ be a resolution which is an isomorphism over the smooth locus of $X$. Assume one of the following:
\begin{enumerate}
\item[(a)] $(\pi^*K_X)^2  \cdot c_2(Y) \neq 0$;
\item[(b)] $(\pi^*K_X)^2  \cdot c_2(Y) = 0$ and $\chi(X,\OO_X) > 0$.
\end{enumerate} 
Then $\kappa (X,K_X) \geq 0$. 
\end{thm}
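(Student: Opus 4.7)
The plan is to dispatch most of the theorem via Theorem \ref{thm:nu-n} and to handle the one residual case by a refinement of the Hirzebruch-Riemann-Roch calculation in \eqref{eq:RR2}. Case~(a) is immediate: Theorem \ref{thm:nu-n}(a) with $n=4$ already forces $K_X$ to be semiample, hence $\kappa(X,K_X)\geq 0$. In case (b), I would split according to whether $\pi^*K_X\cdot K_Y\cdot c_2(Y)$ vanishes. If not, Theorem \ref{thm:nu-n}(b) applies and again gives semiampleness. So the only situation requiring a new argument is
\[
(\pi^*K_X)^2\cdot c_2(Y)\;=\;\pi^*K_X\cdot K_Y\cdot c_2(Y)\;=\;0.
\]

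In that situation I would return to Hirzebruch-Riemann-Roch on the smooth $4$-fold $Y$. Expanding $\chi(Y,m\pi^*K_X)$ via the Todd class and using \eqref{eq:dot} to kill the products $(\pi^*K_X)^2\cdot K_Y^2$, $(\pi^*K_X)^3\cdot K_Y$ and $(\pi^*K_X)^4$, the only surviving contributions are the coefficients of $m^2$ and $m$ appearing in \eqref{eq:RR2}, plus the constant term $\chi(Y,\OO_Y)$. Under the two vanishing hypotheses above the $m^2$ and $m$ coefficients both vanish, so that
\[
\chi\bigl(X,\OO_X(mK_X)\bigr)\;=\;\chi(Y,\OO_Y)\;=\;\chi(X,\OO_X)
\]
for every positive integer $m$ with $mK_X$ Cartier; here I use that $X$ has rational singularities, so $\chi(Y,\OO_Y)=\chi(X,\OO_X)$.

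Finally, applying Lemma \ref{lem:KVvanishing} to $mK_X\sim_\Q K_X+(m-1)K_X$ with $L=(m-1)K_X$ nef of numerical dimension $3$ kills $H^i(X,\OO_X(mK_X))$ for $i\geq 2$, so that $h^0(X,\OO_X(mK_X))\geq \chi(X,\OO_X)>0$ by hypothesis, and therefore $\kappa(X,K_X)\geq 0$. The only real difficulty is a bookkeeping one: ensuring that in dimension $4$ the $O(m^{n-4})$ remainder in \eqref{eq:RR2} is a genuine constant equal to $\chi(X,\OO_X)$. Both assertions follow from a direct expansion of the Todd polynomial on $Y$ together with \eqref{eq:dot}, so I do not foresee a genuine obstacle; the fact that we only obtain $\kappa\geq 0$ rather than semiampleness reflects that here $h^0(X,\OO_X(mK_X))$ is merely bounded below by a positive constant in $m$, rather than growing polynomially as in Theorem \ref{thm:nu-n}.
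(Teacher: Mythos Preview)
Your argument is correct and matches the paper's proof essentially line for line: reduce via Theorem~\ref{thm:nu-n} to the case where both $(\pi^*K_X)^2\cdot c_2(Y)$ and $\pi^*K_X\cdot K_Y\cdot c_2(Y)$ vanish, observe that Hirzebruch--Riemann--Roch then collapses to $\chi\bigl(X,\OO_X(mK_X)\bigr)=\chi(X,\OO_X)$, and combine with the vanishing from Lemma~\ref{lem:KVvanishing} to get $h^0\bigl(X,\OO_X(mK_X)\bigr)\geq\chi(X,\OO_X)>0$. The paper also records the parallel identity $\chi\bigl(Y,\OO_Y(K_Y+m\pi^*K_X)\bigr)=\chi(X,\OO_X)$, but this is not needed for the conclusion and your single computation suffices.
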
 

\begin{proof} 
By Theorem \ref{thm:nu-n}, we may assume that 
$$(\pi^*K_X)^2 \cdot c_2(X) =  0\quad\text{and}\quad (\pi^*K_X) \cdot K_Y \cdot c_2(Y) =  0.$$ 
In this case Hirzebruch-Riemann-Roch gives
$$ \chi\big(Y,\OO_Y(K_Y + m \pi^*(mK_X)\big) = \chi(Y,\OO_Y) = \chi(X,\OO_X)$$
and 
$$ \chi\big(X,\OO_X(mK_X)\big) = \chi(X,\OO_X).$$ 
Hence, arguing as in the proof of Theorem \ref{thm:nu-n} we obtain $\kappa (X,K_X) \geq 0$. Note that we can no longer apply Theorem \ref{thm:kaw} to deduce semiampleness. 
\end{proof} 

\begin{rem} 
Let $X$ be a minimal terminal $n$-fold with $\nu(X,K_X) = n-1$, and let $\pi\colon Y \to X$ be a resolution which is an isomorphism over the smooth locus of $X$. We argue that if we have the vanishing 
\begin{equation}\label{eq:21}
(\pi^*K_X)^{n-2}  \cdot c_2(Y) = 0,
\end{equation}
then the geometry of $X$ is special. Indeed, assume that $K_X$ is semiample, and let $f\colon X \to Z$ be the associated Iitaka fibration. We claim that $f$ is almost smooth in codimension one. Indeed, there is a positive integer $m$ and a very ample divisor $A$ on $Z$ such that $mK_X = f^*A$. If $D_1, \dots,D_{n-2} \in \vert A \vert $ are general elements, then $C = D_1 \cap \ldots \cap D_{n-2}$ is a smooth curve and $S = f^{-1}(C)$ is a smooth surface proportional to $K_X^{n-2}$, hence \eqref{eq:21} implies
$$ c_2(T_X \vert_S) = c_2(T_S) = 0. $$ 
Hence the only singular fibres of $f \vert_S$ are multiple elliptic curves (this is a classical fact: use Proposition V.12.2 and Remark before that proposition in \cite{BHPV04} together with Hirzebruch-Riemann-Roch). Consequently, there is a subset $B \subseteq Z$ of codimension at least $2$ such that 
for each $b \in X \setminus B$, the fibre of $f$ over $b$ is a smooth elliptic curve or a multiple of an elliptic curve. 
\end{rem} 

Finally, we obtain the proof of Conjecture \ref{mum2} on $4$-folds of numerical dimension $3$.

\begin{cor} 
Let $X$ be a smooth projective $4$-fold with $K_X$ pseudoeffective and $\nu(X,K_X) = 3$. Then there is a positive integer $m$ such that 
$$ H^0\big(X,(\Omega^1_X)^{\otimes m}\big) \neq 0.$$ 
\end{cor}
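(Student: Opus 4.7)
The plan is to argue by contradiction: assume $H^0\big(X,(\Omega^1_X)^{\otimes m}\big)=0$ for every $m\geq1$, and derive that $\kappa(X,K_X)\geq0$. This suffices, since $\OO_X(K_X)=\bigwedge^4\Omega^1_X$, so $\OO_X(mK_X)=\big(\bigwedge^4\Omega^1_X\big)^{\otimes m}$, and Remark \ref{rem:tensor} then produces some $m'\geq1$ with $H^0\big(X,(\Omega^1_X)^{\otimes m'}\big)\neq0$, a contradiction.

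First I would extract numerical information from the assumption. Since $\Omega^q_X$ embeds into $(\Omega^1_X)^{\otimes q}$ (cf.\ Remark \ref{rem:tensor}) and by Hodge symmetry,
$$H^q(X,\OO_X)\cong H^0(X,\Omega^q_X)=0\quad\text{for every }q\geq1,$$
so $\chi(X,\OO_X)=1$.

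Next I would pass to a minimal model. Since $X$ is smooth of dimension four and $K_X$ is pseudoeffective, a minimal model $Y$ of $X$ exists by \cite{BCHM,Fuj05}, with $\Q$-factorial terminal singularities and $K_Y$ nef. The numerical dimension is preserved by the MMP, so $\nu(Y,K_Y)=3$. Moreover, terminal singularities are rational, and the steps of the MMP are divisorial contractions and flips between varieties with rational singularities, so the holomorphic Euler characteristic is a birational invariant throughout; this gives $\chi(Y,\OO_Y)=\chi(X,\OO_X)=1$.

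The final step is to invoke Theorem \ref{thm:nu3} for $Y$, applied to any resolution $\pi\colon\widetilde Y\to Y$ which is an isomorphism over the smooth locus. If $(\pi^*K_Y)^2\cdot c_2(\widetilde Y)\neq0$ then case (a) applies; otherwise the condition $\chi(Y,\OO_Y)=1>0$ places us in case (b). Either way $\kappa(Y,K_Y)\geq0$, hence $\kappa(X,K_X)\geq0$ by the birational invariance of the Kodaira dimension, and the argument closes. There is no serious obstacle here: the corollary is essentially a packaging of Theorem \ref{thm:nu3} with the existence of minimal models for smooth fourfolds, and the only substantive observation is that the dichotomy in Theorem \ref{thm:nu3} is resolved for free as soon as one knows $\chi(X,\OO_X)=1$, which is itself immediate from the hypothesis on tensor powers of $\Omega^1_X$.
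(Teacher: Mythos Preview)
Your proof is correct and follows essentially the same approach as the paper: assume the vanishing for all $m$, deduce $\chi(X,\OO_X)=1$ via Hodge symmetry, pass to a minimal model $Y$ using \cite{BCHM,Fuj05}, and apply Theorem~\ref{thm:nu3} to obtain $\kappa(Y,K_Y)\geq0$, a contradiction. You spell out a few details (why $\chi$ is a birational invariant, how the dichotomy in Theorem~\ref{thm:nu3} is resolved) that the paper leaves implicit, but the argument is the same.
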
 

\begin{proof}  
Assume to the contrary that 
$$ H^0\big(X,(\Omega^1_X)^{\otimes m}\big) = 0\quad\text{for all } m \geq 1,$$
so that $ \chi(X,\OO_X) = 1$ as in the proof of Theorem \ref{thm:nu11}. Let $Y$ be a minimal model of $X$, which exists by \cite{BCHM,Fuj05}. Then $\nu(Y,K_Y)=3$ and $\chi(Y,\OO_Y) = 1$, hence $\kappa (Y,K_Y) \geq 0$ by Theorem \ref{thm:nu3}. This is a contradiction. 
\end{proof} 

\section{Metrics with algebraic singularities} 

We recall the definition of a singular hermitian metric with algebraic singularities, following \cite{DPS01} and \cite{Dem01}.

\begin{dfn} 
Let $X$ be a normal projective variety and let $D$ be a $\Q$-Cartier divisor. We say that $D$, or $\mathcal O_X(D)$, has a \emph{metric with algebraic singularities 
and semipositive curvature current}, if there exists a positive integer $m$ such that $mD$ is Cartier and if there exists a resolution of singularities $\pi\colon Y \to X$ such that the line bundle $\pi^*\OO_X(D)$ has a singular metric $h$ whose curvature current is semipositive (as a current), and the local plurisubharmonic weights $\varphi$ of $h$ are of the form
$$ \varphi = \sum \lambda_j \log \vert g_j \vert + O(1),$$
where $\lambda_j$ are positive rational numbers, $O(1)$ is a bounded term, and the divisors $D_j$ defined locally by $g_j$ form a simple normal crossing divisor on $Y$. 
\end{dfn} 

\begin{rem}\label{rem:metric}
It is well known that a line bundle $L$ on a normal projective variety is pseudoeffective if and only if it has a singular metric whose curvature current is semipositive. It is a consequence of the Minimal Model Program that on a terminal variety with the pseudoeffective canonical sheaf, the canonical sheaf always has a metric with algebraic singularities and semipositive curvature current.
\end{rem}

The following is one of the main results of \cite{LP16}.

\begin{thm} \label{thm:LP4.3} 
Assume the existence of good minimal models of klt pairs in dimensions at most $n-1$ and let $X$ be a projective terminal variety of dimension $n$ with $K_X$ pseudoeffective. Suppose that $K_X$ has a metric with algebraic singularities and semipositive curvature current. If $\chi(X,\OO_X) \neq 0$, then $\kappa(X,K_X) \geq 0$. 
\end{thm}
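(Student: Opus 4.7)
The plan is to argue by contradiction: assume $\kappa(X, K_X) = -\infty$ and derive $\chi(X, \OO_X) = 0$. Fix a log resolution $\pi \colon Y \to X$ compatible with the algebraic singularities of the given metric $h$ on $\pi^*\OO_X(K_X)$, so that the local weights of $h$ are of the form $\sum \lambda_j \log |g_j| + O(1)$ with $\lambda_j \in \Q_{\geq 0}$ and $\sum D_j$ an SNC divisor on $Y$.

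The first step, following the strategy already sketched in Section \ref{sec:nd1}, is to prove
$$H^0\big(Y, \Omega^p_Y \otimes \OO_Y(m\pi^*K_X)\big) = 0$$
for every $p \geq 0$ and every $m \geq 1$ sufficiently divisible. This is the main technical input; it is deduced from the standing assumption $\kappa(X, K_X) = -\infty$ via the birational stability of the cotangent bundle of Campana-P\u{a}un \cite{CP11,CP15}, and the passage to a suitable minimal model of a birational modification of $X$ is precisely where the hypothesis on existence of good minimal models in dimensions at most $n-1$ is consumed.

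The second step is a dichotomy driven by the algebraic singularities of $h$. If some $\lambda_j$ is positive, then $E = \sum \lambda_j D_j$ is a nontrivial effective $\Q$-divisor, and a careful analysis using multiplier ideals and Nadel-type vanishing, together with the fact that the smooth part of $h$ represents a nef class, extracts an honest global section of some $\pi^*\OO_X(mK_X)$; this forces $\kappa(X, K_X) \geq 0$, contradicting the standing assumption. Otherwise every $\lambda_j$ vanishes, so $h$ is continuous, $\pi^*K_X$ is nef, and every multiplier ideal $\mathcal{I}(h^m)$ equals $\OO_Y$. The Hard Lefschetz theorem of Demailly-Peternell-Schneider \cite{DPS01} then asserts that for a K\"ahler class $[\omega]$ on $Y$ the iterated cup product
$$\omega^q \colon H^0\big(Y, \Omega^{n-q}_Y \otimes \OO_Y(m\pi^*K_X)\big) \to H^q\big(Y, \OO_Y(K_Y + m\pi^*K_X)\big)$$
is surjective for every $q$, and combining this with Step 1 forces $H^q(Y, K_Y + m\pi^*K_X) = 0$ for every $q \geq 0$ and every sufficiently divisible $m$.

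For the final step I apply Hirzebruch-Riemann-Roch: $\chi(Y, K_Y + m\pi^*K_X)$ is a polynomial in $m$ which vanishes on an arithmetic progression by the cohomology vanishing just obtained, hence is identically zero. Setting $m = 0$ and using Serre duality together with the rationality of the singularities of the terminal variety $X$ yields
$$\chi(X, \OO_X) = \chi(Y, \OO_Y) = (-1)^n \chi(Y, K_Y) = 0,$$
contradicting the hypothesis $\chi(X, \OO_X) \neq 0$. The hardest part is Step 1; the first subcase of Step 2 is also delicate, as one must produce an actual global section from the metric rather than a mere numerical decomposition, and this is exactly where the \emph{algebraic} (as opposed to merely analytic) nature of the singularities of $h$ plays its role.
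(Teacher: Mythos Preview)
The paper does not prove Theorem~\ref{thm:LP4.3}; it is quoted as one of the main results of \cite{LP16}. Your plan is a faithful extrapolation to arbitrary $\nu$ of the strategy that the paper sketches in Section~\ref{sec:nd1} for the special case $\nu(X,K_X)=1$ (Theorem~\ref{thm:nu1}): derive the vanishing $H^0\big(Y,\Omega^p_Y\otimes\OO_Y(m\pi^*K_X)\big)=0$ from $\kappa=-\infty$, run a dichotomy on the singularities of the metric, and in the smooth branch combine the Hard Lefschetz theorem of \cite{DPS01} with Hirzebruch--Riemann--Roch to force $\chi(X,\OO_X)=0$.

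One small calibration. In the $\nu=1$ sketch, \emph{both} inputs---the birational stability of the cotangent bundle \emph{and} the $\nu=1$ decomposition criterion (``$L=P+D$ with $P$ pseudoeffective, $D\neq0$ effective, $\nu(L)=1$ $\Rightarrow$ $\kappa(L)\geq0$'')---are used already to establish your Step~1, not only in the singular branch of the dichotomy. For arbitrary $\nu$ that criterion is unavailable and must be replaced; this replacement is where the argument genuinely differs from the $\nu=1$ case, and it is a reasonable guess that the good-models hypothesis in dimensions $\leq n-1$ is consumed there, as you say. Whether it enters in Step~1, in your Step~2a, or in both is a detail internal to \cite{LP16} that the present paper does not record.
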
 

Towards Mumford's conjecture, this implies:

\begin{thm} \label{thm4} 
Let $X$ be a projective manifold of dimension $n$ with $K_X$ pseudoeffective. Assume that $K_X$ has a metric  with algebraic singularities and semipositive curvature current.
\begin{enumerate}
\item[(i)] If good minimal models for klt pairs exist in dimensions at most $n-1$, then there is a positive integer $m$ such that
$$ H^0\big(X,(\Omega^1_X)^{\otimes m}\big) \neq 0. $$
\item[(ii)] If $n=4$, then there is a positive integer $m$ such that
$$ H^0\big(X,(\Omega^1_X)^{\otimes m}\big) \neq 0. $$
\end{enumerate} 
\end{thm}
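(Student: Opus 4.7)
The plan is to argue by contradiction, reducing the statement to a direct application of Theorem \ref{thm:LP4.3}. In spirit, this is the same packaging that takes Theorem \ref{thm:nu1} to Theorem \ref{thm:nu11}, or Theorem \ref{thm:nu3} to the corollary in Section 4; the only difference is that here no passage to a minimal model is needed, because Theorem \ref{thm:LP4.3} is already stated for an arbitrary projective terminal variety with pseudoeffective canonical class.

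First I would assume for contradiction that $H^0\big(X,(\Omega^1_X)^{\otimes m}\big)=0$ for every $m\geq1$. By Remark \ref{rem:tensor}, each exterior power $\Omega^q_X=\wedge^q\Omega^1_X$ embeds into a suitable tensor power of $\Omega^1_X$, so $H^0(X,\Omega^q_X)=0$ for all $q\geq1$; Hodge symmetry then gives $H^q(X,\OO_X)=0$ for $q\geq1$, and hence $\chi(X,\OO_X)=1\neq0$. This is the only input from Hodge theory.

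Next I would apply Theorem \ref{thm:LP4.3} directly to $X$. Since $X$ is smooth it is terminal, $K_X$ is pseudoeffective by hypothesis, and by hypothesis $K_X$ carries a metric with algebraic singularities and semipositive curvature current. In case (i) the MMP assumption in Theorem \ref{thm:LP4.3} is given; in case (ii), $n=4$ and good minimal models of klt pairs in dimension at most $3$ are classical. Thus in either case Theorem \ref{thm:LP4.3} produces $\kappa(X,K_X)\geq0$, and so $H^0\big(X,\OO_X(mK_X)\big)\neq0$ for some positive integer $m$.

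Finally, since $\OO_X(mK_X)=(\wedge^n\Omega^1_X)^{\otimes m}$, a second application of Remark \ref{rem:tensor} yields a nonzero section of $(\Omega^1_X)^{\otimes m'}$ for some $m'>0$, contradicting the initial assumption. I do not anticipate a genuine obstacle: the entire technical content is already packaged in Theorem \ref{thm:LP4.3} (proved in \cite{LP16}), and the present argument is a short bookkeeping reduction with no new Hirzebruch-Riemann-Roch or MMP step required.
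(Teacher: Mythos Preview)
Your proposal is correct and follows essentially the same argument as the paper: assume the vanishing, deduce $\chi(X,\OO_X)=1$ via Hodge symmetry and Remark \ref{rem:tensor}, then invoke Theorem \ref{thm:LP4.3} to obtain $\kappa(X,K_X)\geq 0$ and reach a contradiction. The only cosmetic difference is that the paper simply refers back to the proof of Theorem \ref{thm:nu11} for the $\chi(X,\OO_X)=1$ step and leaves the final contradiction implicit, whereas you spell out both applications of Remark \ref{rem:tensor}.
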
 

\begin{proof}
Assume to the contrary that 
$$ H^0\big(X,(\Omega^1_X)^{\otimes m}\big) = 0\quad\text{for all } m \geq 1,$$
so that $ \chi(X,\OO_X) = 1$ as in the proof of Theorem \ref{thm:nu11}. Then $\kappa(X,K_X)\geq0$ by Theorem \ref{thm:LP4.3}, which is a contradiction that shows (i). 

The second statement follows immediately since good models of ca\-no\-ni\-cal threefolds exist by \cite{Mor88,Sho85,Miy87,Miy88a,Miy88b,Kaw92}.
\end{proof}

\bibliographystyle{amsalpha}

\bibliography{biblio}
\end{document}